\begin{document}
%
%
%
\theoremstyle{definition}
\newtheorem{Definition}{Definition}[section]
\newtheorem*{Definitionx}{Definition}
\newtheorem{Convention}{Definition}[section]
\newtheorem{Construction}{Construction}[section]
\newtheorem{Example}[Definition]{Example}
\newtheorem{Examples}[Definition]{Examples}
\newtheorem{Remark}[Definition]{Remark}
\newtheorem{Remarks}[Definition]{Remarks}
\newtheorem{Assumption}[Definition]{Assumption}
\newtheorem{Caution}[Definition]{Caution}
\newtheorem{Conjecture}[Definition]{Conjecture}
\newtheorem*{Conjecturex}{Conjecture}
\newtheorem{Question}[Definition]{Question}
\newtheorem{Questions}[Definition]{Questions}
\newtheorem*{Acknowledgements}{Acknowledgements}
\theoremstyle{plain}
\newtheorem{Theorem}[Definition]{Theorem}
\newtheorem*{Theoremx}{Theorem}
\newtheorem{Proposition}[Definition]{Proposition}
\newtheorem*{Propositionx}{Proposition}
\newtheorem{Lemma}[Definition]{Lemma}
\newtheorem{Corollary}[Definition]{Corollary}
\newtheorem*{Corollaryx}{Corollary}
\newtheorem{Fact}[Definition]{Fact}
\newtheorem{Facts}[Definition]{Facts}
\newtheoremstyle{voiditstyle}{3pt}{3pt}{\itshape}{\parindent}%
{\bfseries}{.}{ }{\thmnote{#3}}%
\theoremstyle{voiditstyle}
\newtheorem*{VoidItalic}{}
\newtheoremstyle{voidromstyle}{3pt}{3pt}{\rm}{\parindent}%
{\bfseries}{.}{ }{\thmnote{#3}}%
\theoremstyle{voidromstyle}
\newtheorem*{VoidRoman}{}

%
\newcommand{\prf}{\par\noindent{\sc Proof.}\quad}
\newcommand{\blowup}{\rule[-3mm]{0mm}{0mm}}
\newcommand{\cal}{\mathcal}
\newcommand{\Aff}{{\mathds{A}}}
\newcommand{\BB}{{\mathds{B}}}
\newcommand{\CC}{{\mathds{C}}}
\newcommand{\FF}{{\mathds{F}}}
\newcommand{\GG}{{\mathds{G}}}
\newcommand{\HH}{{\mathds{H}}}
\newcommand{\NN}{{\mathds{N}}}
\newcommand{\ZZ}{{\mathds{Z}}}
\newcommand{\PP}{{\mathds{P}}}
\newcommand{\QQ}{{\mathds{Q}}}
\newcommand{\oQQ}{{\overline{\QQ}}}
\newcommand{\RR}{{\mathds{R}}}
\newcommand{\Liea}{{\mathfrak a}}
\newcommand{\Lieb}{{\mathfrak b}}
\newcommand{\Lieg}{{\mathfrak g}}
\newcommand{\Liem}{{\mathfrak m}}
\newcommand{\ideala}{{\mathfrak a}}
\newcommand{\idealb}{{\mathfrak b}}
\newcommand{\idealg}{{\mathfrak g}}
\newcommand{\idealm}{{\mathfrak m}}
\newcommand{\idealp}{{\mathfrak p}}
\newcommand{\idealq}{{\mathfrak q}}
\newcommand{\idealI}{{\cal I}}
\newcommand{\lin}{\sim}
\newcommand{\num}{\equiv}
\newcommand{\dual}{\ast}
\newcommand{\iso}{\cong}
\newcommand{\homeo}{\approx}
\newcommand{\mm}{{\mathfrak m}}
\newcommand{\pp}{{\mathfrak p}}
\newcommand{\qq}{{\mathfrak q}}
\newcommand{\rr}{{\mathfrak r}}
\newcommand{\pP}{{\mathfrak P}}
\newcommand{\qQ}{{\mathfrak Q}}
\newcommand{\rR}{{\mathfrak R}}
%
%
\newcommand{\dq}{{``}}
\newcommand{\OO}{{\cal O}}
\newcommand{\into}{{\hookrightarrow}}
\newcommand{\onto}{{\twoheadrightarrow}}
\newcommand{\Spec}{{\rm Spec}\:}
\newcommand{\BigSpec}{{\rm\bf Spec}\:}
\newcommand{\Spf}{{\rm Spf}\:}
\newcommand{\Proj}{{\rm Proj}\:}
\newcommand{\Pic}{{\rm Pic }}
\newcommand{\Br}{{\rm Br}}
\newcommand{\NS}{{\rm NS}}
\newcommand{\chit}{\chi_{\rm top}}
\newcommand{\KanDiv}{{\cal K}}
\newcommand{\Cycl}[1]{{\ZZ/{#1}\ZZ}}
\newcommand{\Sym}{{\mathfrak S}}
\newcommand{\ab}{{\rm ab}}
\newcommand{\Aut}{{\rm Aut}}
\newcommand{\Hom}{{\rm Hom}}
\newcommand{\ord}{{\rm ord}}
\newcommand{\Alb}{{\rm Alb}}
\newcommand{\Jac}{{\rm Jac}}
\newcommand{\defin}[1]{{\bf #1}}
\newcommand{\NE}{{\rm NE}}
\newcommand{\oNE}{\overline{{\rm NE}}}

\def\cC{\mathcal C}
\def\Po{\mathbb P^1}
\let\sub=\subset
\makeatletter
\newcommand{\subjclassname@NEW}{2010 Mathematics Subject Classification}
\makeatother

\title[Rational Curves]{Rational Curves on K3 Surfaces}

\author[Jun Li]{Jun Li}
\address{Department of Mathematics, Stanford University, Stanford CA 94305-2125, USA}
\curraddr{}
\email{jli@math.stanford.edu}

\author[Christian Liedtke]{Christian Liedtke}
\address{Department of Mathematics, Stanford University, Stanford CA 94305-2125, USA}
\curraddr{}
\email{liedtke@math.stanford.edu}

\date{\today}
\subjclass[NEW]{14J28, 14N35, 14G17}
\maketitle

\begin{abstract}
 We show that projective K3 surfaces with odd Picard rank 
 contain infinitely many rational curves.
 Our proof extends the Bogomolov-Hassett-Tschinkel approach, 
 i.e., uses moduli spaces of stable maps and 
 reduction to positive characteristic.
\end{abstract}

\section*{Introduction}

For a complex variety of general type, Lang's conjecture \cite{lang}
predicts that all rational curves are contained in a proper algebraic set.
On the other extreme, varieties of negative Kodaira dimension are
conjecturally uniruled, and these contain moving 
families of rational curves through every general point.
In between these two extremes lie varieties with trivial canonical
sheaves, e.g., K3 surfaces, Calabi-Yau manifolds and Abelian varieties.
Abelian varieties contain no rational curves at all.
Although complex K3 surfaces contain no moving families of 
rational curves, we have the well-known

\begin{Conjecturex}
 Every projective K3 surface over an algebraically closed field 
 contains infinitely many integral rational curves.
\end{Conjecturex}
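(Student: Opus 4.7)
The plan is to follow the Bogomolov--Hassett--Tschinkel paradigm: spread a given polarized K3 surface $(X,H)$ out over a finitely generated $\ZZ$-subalgebra of its field of definition, pass to reductions modulo infinitely many primes $p$, exploit the comparative abundance of rational curves on K3 surfaces in positive characteristic, and lift the resulting curves back to characteristic zero using moduli spaces of genus-zero stable maps. Before starting one makes the following easy reduction: to prove the conjecture it suffices to exhibit, for infinitely many positive integers $n$, at least one integral rational curve of class $nH$ in $\NS(X)$, since a fixed integral curve has a single class and the self-intersections $(nH)^2 = n^2 H^2$ are unbounded.

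First I would set up the family and its moduli spaces. Choose a smooth projective family $\pi\colon \mathcal X \to S$ over an integral base $S$ of finite type over $\Spec\ZZ$, together with a relatively ample class $\mathcal H$, whose geometric generic fiber recovers $(X,H)$. For each $n\geq 1$ consider the relative Kontsevich moduli space $\overline M_0(\mathcal X/S, n\mathcal H)$ of stable maps of genus zero and class $n\mathcal H$; it is proper over $S$. On a K3 surface the virtual dimension of this moduli space is zero, and any smooth rational curve $C\subset X$ is rigid because $\OO_C(C)\cong\OO_{\Po}(-2)$ has no sections. This rigidity makes the correspondence between stable maps and integral rational curves well-behaved and, together with properness of $\overline M_0$ over $S$, provides the lifting mechanism: an unobstructed rational curve in the special fiber extends to nearby fibers, and a rational curve in the generic fiber specializes to a rational curve (possibly on a reducible nodal source).

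The heart of the argument is to produce, in characteristic $p$, rational curves in the prescribed classes. The strategy is to select infinitely many closed points $s\in S$ for which the Picard rank of $\mathcal X_{\bar s}$ strictly exceeds $\rho(X)$ (most saliently at supersingular reductions, where it jumps all the way to $22$), and to exploit the new algebraic classes to build rational curves whose numerical classes eventually hit every multiple of $H$ in some controlled way. Concretely, one combines (i) a supply of rational curves in some ``extra'' class $D_s$ on $\mathcal X_{\bar s}$, obtained via positivity and Bogomolov--Mumford-type existence results for big classes on K3 surfaces, with (ii) a numerical mechanism allowing one to vary $n$ by varying $s$. The key technical hurdle, and what I expect to be the main obstacle, is precisely to make this variation produce infinitely many distinct lifts to $X$: without a careful class-counting argument one risks that all the rational curves produced in characteristic $p$ are specializations of the same finite list of rational curves in characteristic $0$, yielding nothing new. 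Overcoming this requires invoking a parity or rank mismatch between $\NS(X)$ and the Neron--Severi group of a positive-characteristic reduction, which is automatic when $\rho(X)$ is odd---since the latter is always even---but which demands additional input in general.
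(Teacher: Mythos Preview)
The statement you are trying to prove is listed in the paper as a \emph{Conjecture}; the paper does not prove it in full generality. What the paper does prove is the special case of odd Picard rank (Theorem \ref{main} and Theorem \ref{main p>0}), and you yourself correctly identify this as the accessible case in your last sentence: the parity mismatch between $\rho(X)$ (odd) and $\rho(X_{\overline{\FF}_p})$ (even, by Nygaard--Ogus) is precisely what drives the argument. So as a proof of the full conjecture your proposal is incomplete by your own admission, and this matches the state of the paper.

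Even restricting to the odd-rank case, there is a genuine technical gap in your outline that the paper has to work to close. The extra rational curve $D_\idealq$ you find on $X_\idealq$ has class \emph{not} in the image of $\Pic(X)\otimes\QQ$, so $D_\idealq$ itself cannot lift to $X$. To get something liftable you must complete it to a divisor in $|mH|$ by adding further rational curves $D_i$ (via Theorem \ref{rational curve in linear system}). The resulting cycle $D_\idealq + \sum D_i$ is typically non-reduced, and an arbitrary genus-zero stable map representing it will \emph{not} be rigid: components mapping with degree $>1$ onto their image deform as multiple covers, so $\overline{{\cal M}}_0(X_\idealq,mH)$ is positive-dimensional at such a point and the lifting argument collapses. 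The paper's main new contribution is the notion of a \emph{rigidifier} (Definition \ref{def:rigidifier}) and the construction in Theorem \ref{rigidifiers produce rigid maps}, which shows how to glue the $D_i$ to copies of an ample nodal rational curve so that the resulting stable map satisfies the rigidity criterion of Lemma \ref{rigid}. Moreover, the rigidifier may not exist on $X_\idealq$ itself, so one first deforms inside a Noether--Lefschetz divisor $B_\idealq$ to a surface $Z$ that has one (Proposition \ref{prop:rigidifier open}), rigidifies there, and only then lifts across all of $U$. None of this machinery is visible in your sketch.

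One further correction: you suggest supersingular reductions as the ``most salient'' source of extra classes. In fact these are exactly the reductions the paper \emph{avoids}: on a supersingular K3 rational curves may move in families, so rigidity fails and the stable-map argument breaks. The paper uses Bogomolov--Zarhin to pass to ordinary (hence non-supersingular) reductions at a density-one set of primes, and there the even-rank statement comes from the Tate conjecture (Nygaard--Ogus), not from $\rho=22$.
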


Bogomolov and Mumford \cite{mori mukai} showed that every complex projective K3 
surface contains at least one rational curve.
Next, Chen \cite{chen} established existence of infinitely many rational
curves on very general complex projective K3 surfaces.
Since then, infinitely many rational curves have been established on
polarized K3 surfaces of degree $2$ and Picard rank $\rho=1$ \cite{bht},
elliptic K3 surfaces \cite{bt density}, and K3 surfaces 
with infinite automorphism groups.
In particular, this includes all K3 surfaces with 
$\rho\geq5$, as well as ``most'' K3 surfaces with $\rho\geq3$,
see \cite{bt density}.
In this article, we prove 

\begin{Theoremx}
 A complex projective K3 surface with odd Picard rank 
 contains infinitely many integral rational curves.
\end{Theoremx}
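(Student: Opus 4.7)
\medskip
\noindent\textbf{Overall strategy.} I would extend the Bogomolov--Hassett--Tschinkel method as follows. Spread $X$ out to a smooth projective family $\mathcal X \to \Spec R$ over the ring of integers $R$ of a finitely generated subfield of $\CC$ over which $X$ and all its Picard classes are defined. The goal is to exhibit, for infinitely many distinct classes $\beta \in \NS(X)$, a rational curve on $X$ representing $\beta$. Such curves will arise by constructing rational curves on geometric special fibers $X_{\bar \pp}$ and lifting them to the generic fiber via the Kontsevich moduli space $\overline{\cal M}_{0,0}(\mathcal X/R,\beta)$, which is proper over $\Spec R$.

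\medskip
\noindent\textbf{Arithmetic input from odd Picard rank.} The essential new ingredient is a Picard-rank jumping statement. When $\rho(X)$ is odd, the transcendental lattice $T(X)$ has odd rank $22-\rho(X)$, and by work of Bogomolov--Zarhin and Charles the set of primes $\pp\subset R$ of good reduction for which $\rho(X_{\bar\pp})>\rho(X)$ is infinite. Fix any such $\pp$. Then $\NS(X_{\bar\pp})$ has rank $\geq 2$ with signature $(1,\rho(X_{\bar\pp})-1)$, so it contains non-zero isotropic vectors; after standard reflection arguments $X_{\bar\pp}$ carries an elliptic (possibly quasi-elliptic) fibration, and the Bogomolov--Tschinkel density theorem in positive characteristic yields infinitely many rational curves on $X_{\bar\pp}$.

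\medskip
\noindent\textbf{Lifting step.} The rational curves on $X_{\bar\pp}$ typically live in classes outside the image of $\NS(X)\into\NS(X_{\bar\pp})$, so one cannot lift them individually. Instead, I would group them into genus-zero stable maps (configurations of rational components) whose total class $\sum m_i\gamma_i$ does lie in the image of some $\beta\in \NS(X)$. Properness of $\overline{\cal M}_{0,0}(\mathcal X/R,\beta)$ over $R$ then extends any such stable map, after a finite extension of $R$, to an $R'$-point whose generic fiber is a rational curve (smooth, nodal, or a genus-zero tree) on $X$ of class $\beta$. The abundance of rational curves on the various $X_{\bar\pp}$ should realize infinitely many admissible $\beta\in\NS(X)$.

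\medskip
\noindent\textbf{Main obstacle.} The delicate point is ensuring that the lifts produce infinitely many \emph{distinct} integral rational curves on $X$, and not merely members of a bounded family, multi-covers, or degenerations of a finite set of generic curves. The virtual dimension of $\overline{\cal M}_{0,0}(X,\beta)$ vanishes on a K3 surface, which is on the right side, but one must prevent a single generic curve from accounting for infinitely many special-fiber configurations via bubbling or covers, and one must verify that distinct $\beta$'s are actually realized. A bend-and-break style dimension count on the special fiber, combined with bounds on the self-intersections $\beta^2$ of the lifted classes forcing $\beta^2\to\infty$, should separate these contributions and yield the desired infinity of integral rational curves on $X$.
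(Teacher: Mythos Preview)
Your outline has the right arithmetic skeleton, but two load-bearing steps are either wrong or missing.

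\textbf{Properness does not lift.} You write that properness of $\overline{\cal M}_{0,0}({\cal X}/R,\beta)\to\Spec R$ ``extends any such stable map \ldots\ to an $R'$-point''. Properness goes the other way: it extends generic points to special points, not special to generic. To lift a stable map $[f_{\idealp}]$ from a closed fiber you need the component of $\overline{\cal M}_{0,0}({\cal X}/R,\beta)$ through $[f_{\idealp}]$ to dominate the base, and for that you must know that $[f_{\idealp}]$ is \emph{rigid}, i.e., isolated in its fiber. A generic genus-zero stable map assembled from several rational components is not rigid: as soon as two components have the same image, or one is a multiple cover, the map moves in a positive-dimensional family inside the special fiber and the whole component sits over $\idealp$. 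You acknowledge this under ``Main obstacle'' but offer no mechanism; bend-and-break does not produce rigidity here.

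\textbf{The missing idea: rigidifiers and sideways deformation.} The paper's contribution is precisely a device to force rigidity. A \emph{rigidifier} is a nodal integral rational curve in an ample class; the paper proves (Lemma~2.6, Theorem~2.9) that after attaching enough copies of a rigidifier to any sum of rational curves, one obtains a rigid stable map in a multiple of the polarization. Since rigidifiers need not exist on $X_{\idealp}$ itself, the paper first deforms $(X_{\idealp},D_{\idealp})$ \emph{sideways} along a Noether--Lefschetz divisor $B_{\idealp}$ in the moduli space of polarized K3 surfaces to a nearby surface $Z$ that does carry a rigidifier; there it completes $D$ to a class $(m+kr)H$ and builds the rigid map, which then extends over the entire moduli space and hence specializes back to $X$ with a component of degree $\geq N$. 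Your proposal lacks both the rigidity criterion and this two-step deformation.

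\textbf{A smaller error.} The claim that $\rho(X_{\bar\idealp})\geq 2$ forces an isotropic vector, hence an elliptic fibration, is false: an indefinite rank-$2$ lattice represents zero over $\QQ$ only when minus its discriminant is a square. The paper instead uses the Bogomolov--Mumford theorem (extended to characteristic $p$) that every effective class on a K3 contains a sum of rational curves, and then shows the degree of the resulting curve goes to infinity by a finiteness argument on $\mathrm{Mor}_{<N}(\PP^1,{\cal X})$.
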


Our proof uses the approach of Bogomolov, Tschinkel, and Hassett
from \cite{bht}, i.e., reduction to positive characteristic and  
moduli spaces of stable maps.
More precisely, many essential steps to proving our result have already
been carried out in
\cite{bht}, where the key observation was made that K3 surfaces
over $\overline{\FF}_p$ have even Picard ranks.
The difficulty faced in \cite{bht} comes from the deformation of
non-reduced curves.
We overcome this difficulty by adding so-called {\em rigidifiers} 
to these non-reduced curves, see Section \ref{sec: isolated}.
Our techniques also yield the following 
result in positive characteristic:

\begin{Theoremx}
 A non-supersingular K3 surface with odd Picard rank
 over an algebraically closed field of characteristic $p\geq5$ 
 contains infinitely many integral rational curves.
\end{Theoremx}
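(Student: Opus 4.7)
The plan is to mimic the proof of the characteristic-zero theorem, with ``reduction modulo $p$'' replaced by a specialization entirely within characteristic $p$. Let $X/k$ be non-supersingular with odd Picard rank $\rho$. Choose a finitely generated $\FF_p$-subalgebra $R\subset k$ and spread $X$ to a smooth projective family $\mathcal{X}\to\Spec R$. Specializing at a closed point $s\in\Spec R$ and base-changing to $\overline{\FF}_p$ yields a K3 surface $X_0$. By the Tate conjecture for K3 surfaces in characteristic $p\geq 5$ (Charles, Maulik, Madapusi-Pera), the Picard rank of any K3 surface over $\overline{\FF}_p$ is even; combined with the usual specialization injection $\mathrm{Pic}(X)\hookrightarrow\mathrm{Pic}(X_0)$, this forces $\rho(X_0)>\rho$, so $X_0$ carries divisor classes not coming from $X$.

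By Bogomolov--Mumford applied to these extra classes, together with the refinements of \cite{bht}, we obtain integral rational curves on $X_0$ whose classes do not lie in the image of the specialization map. The core of the argument is to lift such a rational curve $f_0\colon\PP^1\to X_0$ back to the family $\mathcal{X}$: equivalently, to show that the point $[f_0]$ in the relative moduli space $\overline{M}_0(\mathcal{X}/R,\beta)$ lies on a component dominating $\Spec R$, so that $f_0$ deforms across the family to the generic fiber and hence to $X$.

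The principal obstacle, exactly as in the characteristic-zero case, arises when $f_0$ factors through a non-reduced curve on $X_0$: such stable maps can appear as isolated points of $\overline{M}_0$ whose existence on the special fiber need not propagate. This is where the rigidifiers of Section~\ref{sec: isolated} enter: adjoining an auxiliary rigid rational curve to $f_0$ with prescribed incidence data passes us to a modified moduli problem in which the otherwise-isolated points become unobstructed and hence deformable. Applying this procedure to a sequence of rational curves on $X_0$ of unbounded degree produces rational curves on $X$ of unbounded degree, and since each Picard class on a K3 surface contains only finitely many integral rational curves, this yields infinitely many integral rational curves on $X$. The non-supersingular assumption and the restriction $p\geq 5$ enter both to force the Tate-based jump in Picard rank (a supersingular K3 in characteristic $p\geq 5$ already has $\rho=22$, leaving no rank to gain) and to supply the crystalline and deformation-theoretic inputs used throughout the lifting argument.
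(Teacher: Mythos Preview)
Your overall strategy---spread out over a finite-type $\FF_p$-base, use evenness of the Picard rank over $\overline{\FF}_p$ to find rational curves on closed fibers whose classes are not in $\Pic(X)$, then lift via rigidifiers---matches the paper's. But your description of the lifting step contains a genuine gap. You propose to show that $[f_0]\in\overline{{\cal M}}_0({\cal X}/R,\beta)$ lies on a component dominating $\Spec R$; however, $\beta$ is by construction \emph{not} in the image of $\Pic(X)\otimes\QQ$, so it does not extend to the generic fiber and no such component can exist. Adjoining a rigidifier to $f_0$ does not fix this: the class is still not a multiple of $H$. The paper's resolution is more involved: one first deforms $(X_0,D_0)$ along the Noether--Lefschetz-type divisor in ${\cal M}_{2d}$ to a nearby non-supersingular surface $Z$ carrying both the deformed integral curve $D$ and a rigidifier in $|rH|$; on $Z$ one completes $D$ to a sum of rational curves lying in $|mH|$ via Theorem~\ref{rational curve in linear system}, and only then applies Theorem~\ref{rigidifiers produce rigid maps} to produce a rigid stable map in the class $(m+kr)H$. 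Because this class is a multiple of the polarization, it extends over the full moduli space, the rigid map deforms to $X$, and by specializing back through $X_0$ one checks that its image contains an integral component of degree $\geq N$.

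Two smaller points. Your phrase ``a sequence of rational curves on $X_0$ of unbounded degree'' suggests a fixed closed fiber, but on a single $X_0$ the integral rational curves with class outside $\Pic(X)\otimes\QQ$ may all have bounded degree; one must vary the closed point and use a finite-type Hom-scheme argument (the analogue of Proposition~\ref{produce high degree curve}) to force unbounded degree. And the non-supersingular hypothesis is used not merely for Picard-rank parity but to guarantee that rational curves on every fiber are rigid---without this, Lemma~\ref{rigid} and the entire rigidifier mechanism collapse.
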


The article is organized as follows:

In Section \ref{sec: generalities}, we recall a couple of 
results about rational curves on K3 surfaces.
Also, we extend them to characteristic $p$, which is 
probably known to the experts.

In Section \ref{sec: isolated}, we discuss rigid
genus zero stable maps and 
introduce the notion of {\em rigidifiers}.
Rigidifiers are genus zero stable maps that have
the property that any sum of rational curves
on a K3 surface can be represented by a rigid
genus zero stable map after adding sufficiently many
rigidifiers.
In this article, {\em rigid} means that we
allow infinitesimal deformations but no one-dimensional
non-trivial families.

In Section \ref{sec: main}, we prove our main result.
By \cite{bht}, it suffices to establish it for K3 surfaces
over number fields.
For such surfaces, we find rational curves of arbitrary
high degree on reductions modulo $p$.
Next, we deform the surface and its high degree curve
to a nearby surface that contains rigdifiers.
Then, we use these rigidifiers to deform, as well as to
lift to characteristic zero.

\begin{Acknowledgements}
 We thank the referee for remarks and comments.
 The first named author is partially supported by DARPA HR0011-08-1-0091
 and NSF grant NSF-0601002.
 The second named author gratefully acknowledges funding from DFG under 
 research grant LI 1906/1-2 and thanks the department of mathematics
 at Stanford university for kind hospitality.
\end{Acknowledgements}

\section{Generalities}
\label{sec: generalities}

In this section we review general results about rational curves
on K3 surfaces.
On our way, we extend these to characteristic $p$, which
is probably known to the experts.

\begin{Theorem}[Bogomolov-Mumford $+\,\varepsilon$]
 \label{rational curve in linear system}
 Let $X$ be a projective K3 surface over an algebraically closed field $k$.
 Let ${\cal L}$ be a non-trivial, effective and invertible sheaf. 
 Then there exists a divisor in $|{\cal L}|$ that is a sum of
 rational curves.
\end{Theorem}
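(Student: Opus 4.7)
The plan is to reduce the statement to the classical Bogomolov-Mumford theorem in two moves.

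First, fix any effective divisor $D\in|\mathcal{L}|$ and decompose it into irreducible components $D=\sum n_iC_i$. On a K3 surface, the adjunction formula $p_a(C_i)=1+C_i^2/2$ forces $C_i^2\geq-2$, with equality exactly when $C_i$ is a smooth rational curve. For any component with $C_i^2\geq 0$ it is enough to exhibit one divisor in $|C_i|$ that is a sum of rational curves: replacing each such $C_i$ by this representative and summing with the multiplicities $n_i$ yields a divisor in $|\mathcal{L}|$ of the desired form. We are therefore reduced to the case in which $\mathcal{L}=\mathcal{O}_X(C)$ for an irreducible curve $C$ with $C^2\geq 0$. In that case $C$ is nef (the only potential obstruction would come from $C\cdot E<0$ for an irreducible curve $E\neq C$, but such intersection numbers are always non-negative), and Riemann-Roch together with $h^0(\mathcal{L}^{-1})=0$ gives $\dim|\mathcal{L}|\geq 1+C^2/2$.

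Second, in characteristic zero the reduced statement is the classical Bogomolov-Mumford theorem \cite{mori mukai}: deform $(X,\mathcal{L})$ inside the moduli of polarized K3 surfaces to an elliptic K3 $X_0$ carrying a section $S$ and fibre class $F$ such that $\mathcal{L}_0\sim aF+bS$ with $a,b>0$; the divisor formed by $a$ singular fibres and $b$ copies of $S$ lies in $|\mathcal{L}_0|$ and is manifestly a sum of rational curves, and properness of the relative Hilbert scheme of divisors in $|\mathcal{L}|$ transports this example back to $(X,\mathcal{L})$. In characteristic $p>0$ the plan is to lift to this situation. By Deligne's theorem $X$ admits a formal lift over $W(k)$, and the obstruction to lifting $\mathcal{L}$ along with $X$ is a single linear condition on the $20$-dimensional formal deformation space, cutting out a smooth formal divisor along which $(X,\mathcal{L})$ lifts; polarizing and algebraizing via Grothendieck's existence theorem yields an algebraic lift $(\widetilde{X},\widetilde{\mathcal{L}})$ over a mixed-characteristic discrete valuation ring $R$. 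Apply the characteristic zero case to the generic fibre to obtain a divisor in $|\widetilde{\mathcal{L}}_\eta|$ that is a sum of rational curves, and then specialize: properness of the Kontsevich space $\overline{\mathcal{M}}_0(\widetilde{X}/R,\beta)$ ensures that each genus zero stable map on the generic fibre extends to a family whose closed fibre is itself a genus zero stable map into $X$, and summing these gives a divisor in $|\mathcal{L}|$ that is a sum of rational curves.

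The main obstacle is the lifting step when $X$ is supersingular: the Picard rank is then $22$ and cannot drop along any non-trivial formal lift, so individual line bundles need not lift at all. One navigates around this either by running the Bogomolov-Mumford deformation directly in positive characteristic (exploiting the abundance of elliptic fibrations on K3 surfaces in fixed positive characteristic), or by lifting $(X,\mathcal{L}^{\otimes m})$ for a sufficiently divisible $m$ so that the obstruction vanishes rationally, and then extracting an appropriate irreducible component from the resulting rational-curve divisor.
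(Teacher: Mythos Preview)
Your overall architecture matches the paper's: reduce to a nef class, quote the characteristic-zero Bogomolov--Mumford result, and in positive characteristic lift the pair $(X,\mathcal{L})$ via Deligne, algebraize, and specialize. Two points deserve correction.

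First, the ``main obstacle'' you flag in the supersingular case is not an obstacle at all, and your proposed workarounds are either unnecessary or wrong. You yourself state Deligne's result correctly: the locus in the $20$-dimensional formal deformation space where $\mathcal{L}$ extends is a smooth formal divisor, and the content of \cite[Corollaire~1.8]{deligne} is precisely that this divisor is \emph{flat over $W(k)$}. Flatness guarantees a non-trivial characteristic-zero fibre, so $(X,\mathcal{L})$ lifts regardless of whether $X$ is supersingular. The drop in Picard rank from $22$ to at most $20$ only says that not all line bundles lift \emph{simultaneously}; any single one does. Your suggestion to lift $\mathcal{L}^{\otimes m}$ instead is a non-fix: since $\Pic(X)$ is torsion-free, the deformation locus of $\mathcal{L}^{\otimes m}$ coincides with that of $\mathcal{L}$.

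Second, you gloss over the algebraization when $\mathcal{L}$ is big and nef but not ample. Grothendieck's existence theorem needs an ample class on the formal lift, and you have not produced one. The paper handles this by passing to $X'=\Proj\bigoplus_k H^0(X,\mathcal{L}^{\otimes k})$, which has at worst Du~Val singularities and on which $\mathcal{O}(1)$ \emph{is} ample; one lifts and algebraizes $X'$, and then invokes Artin's simultaneous resolution \cite{Artin simultaneous} over a ramified base change to recover a smooth algebraic space lifting $X$ together with $\mathcal{L}$. (The paper also separates off the case $\mathcal{L}^2=0$, where $|\mathcal{L}|$ is already an elliptic pencil and a singular fibre does the job directly.) Once these two points are addressed your argument is essentially the paper's.
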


\begin{proof}
Since $X$ is a K3 surface, ${\cal L}$ is isomorphic to 
${\cal L}'\otimes\OO_X(\sum_i a_iC_i)$, where the $a_i$ are positive
integers, the $C_i$ are smooth rational curves, and ${\cal L}'$ is a
nef invertible sheaf.
Replacing ${\cal L}$ by ${\cal L}'$ we may assume
that ${\cal L}$ is non-trivial, nef and satisfies ${\cal L}^2\geq0$.

If ${\cal L}^2=0$ then $|{\cal L}|$ defines a genus-one fibration $X\to\PP^1$.
Since $X$ is K3, not all fibers are smooth.
In particular, there exists
a fiber, i.e., a divisor in $|{\cal L}|$,
that is a sum of rational curves.

Next, we assume ${\cal L}^2>0$, i.e., ${\cal L}$ is big and nef.
In characteristic zero, our assertion is shown 
in \cite[Proposition 2.5]{bt density}.
If ${\rm char}(k)=p>0$,
then there exists a possibly ramified extension $R$
of the Witt ring $W(k)$ such that the pair $(X,{\cal L})$ lifts to 
a formal scheme over $\Spf R$ by \cite[Corollaire 1.8]{deligne}.
We write this as a limit of schemes $X_n\to\Spec R_n$.
Then, for all $n\geq0$
$$
  X_n'\,:=\,\Proj \bigoplus_{k\geq0}H^0(X_n,{\cal L}_n^{\otimes k})
\,\longrightarrow\,\Spec R_n
$$
is a projective surface, whose special fiber
$X'=X_0'\to\Spec k$ has at worst Du~Val singularities.
Since each $\OO_{X_n'}(1)$ is ample on $X_n'$ we obtain an
ample invertible sheaf on the limit, which is algebraizable
by Grothendieck's existence theorem.
We thus obtain a scheme ${\cal X}'\to\Spec R$ lifting $X'$.
By \cite{Artin simultaneous},
there exists a possibly ramified extension $R\subseteq R'$ and
a smooth algebraic space $\widetilde{{\cal X}}\to\Spec R'$
with special fiber $X$. 
The ample invertible sheaf on ${\cal X}'$ pulls back to 
an invertible sheaf $\widetilde{{\cal L}}$ on $\widetilde{{\cal X}}$,
which lifts ${\cal L}$.
Applying \cite[Proposition 2.5]{bt density} to $\widetilde{{\cal L}}$
and reducing modulo $p$, we find a divisor in $|{\cal L}|$ that
is a sum of rational curves.
\end{proof}

As corollary, we obtain a result of 
Mori and Mukai \cite{mori mukai} that they attribute to
Bogomolov and Mumford, see also
\cite[Corollary 18]{bht}.

\begin{Theorem}[Bogomolov-Mori-Mukai-Mumford $+\varepsilon$]
 A projective K3 surface over an algebraically closed field 
 contains a rational curve. \qed
\end{Theorem}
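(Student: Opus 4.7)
The plan is to obtain the corollary as an immediate consequence of Theorem \ref{rational curve in linear system}. Since $X$ is projective over $k$, it carries a very ample invertible sheaf ${\cal L}$, which in particular is non-trivial and effective. Applying Theorem \ref{rational curve in linear system} to this ${\cal L}$ produces a divisor $D\in|{\cal L}|$ that is a sum $\sum_i a_i C_i$ of rational curves with $a_i\geq 1$. Because ${\cal L}$ is non-trivial the sum is non-empty, and any single $C_i$ is the desired rational curve on $X$.

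There is essentially no obstacle here, since the content of the statement is already packaged in the preceding theorem. The only point worth remarking on is that the ``rational curves'' furnished by Theorem \ref{rational curve in linear system} are indeed integral curves birational to $\PP^1$: in each case of its proof (the smooth rational components split off at the start, the integral components of a singular fiber of the genus-one fibration in the ${\cal L}^2=0$ case, and the characteristic-zero input of \cite[Proposition 2.5]{bt density} pulled back from the lift in the big and nef case) the summands arise as integral rational curves. With that observation, the corollary follows at once without any further work.
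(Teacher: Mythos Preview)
Your proposal is correct and matches the paper's approach exactly: the paper presents this as an immediate corollary of Theorem~\ref{rational curve in linear system} (note the \qed in the statement, with no written proof), and your argument---pick an ample invertible sheaf and apply the preceding theorem---is precisely the intended one-line deduction. Your additional remark about why the summands are genuinely integral rational curves is a reasonable clarification but not something the paper spells out.
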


Moreover, it is commonly believed that 

\begin{Conjecture}
 \label{main conjecture}
 Every projective K3 surface over an algebraically closed field 
 contains infinitely many integral rational curves.
\end{Conjecture}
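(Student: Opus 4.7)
The plan is to carry out the Bogomolov--Hassett--Tschinkel strategy from \cite{bht}, using the rigidifier device announced in the introduction to overcome the non-reduced deformation problem. The approach will, as in the paper's main Theorem, work directly for K3 surfaces of \emph{odd} Picard rank; the even rank case is where I expect the proof of the full conjecture to break down.

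First I would reduce to an arithmetic situation. By the spreading-out arguments of \cite{bht}, it suffices to prove the conjecture for a projective K3 surface $X$ defined over a number field $K \subset \oQQ$, with geometric Picard rank $\rho(X_{\oQQ})$. Fix a prime $\pp$ of $K$ of good reduction with residue field $\overline{k} = \overline{\FF}_p$, $p \geq 5$, and write $\overline{X}$ for the geometric special fiber. The crucial observation from \cite{bht} is that $\rho(\overline{X})$ is always \emph{even}, while the specialization map $\NS(X_{\oQQ}) \hookrightarrow \NS(\overline{X})$ is injective. When $\rho(X_{\oQQ})$ is odd this forces a strict inequality $\rho(\overline{X}) > \rho(X_{\oQQ})$ at every such $\pp$, and by varying $\pp$ through infinitely many primes one can produce, via Theorem \ref{rational curve in linear system}, divisors $D_\pp$ on $\overline{X}$ of unbounded degree whose classes do not come from $\NS(X_{\oQQ})$, each expressible as a sum of rational curves $D_\pp = \sum_i a_{i,\pp}\, C_{i,\pp}$.

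Next I would attack the lifting. Each $D_\pp$ tautologically gives a genus zero stable map $f_\pp \colon C_\pp \to \overline{X}$, but this map typically has non-reduced components (the $a_{i,\pp}>1$ ones) and one-dimensional deformations inside $\overline{X}$, which obstructs deformation to a nearby characteristic-zero lift of a general member of the family. The plan is to enrich $f_\pp$ by attaching a collection of rigidifiers, i.e., auxiliary genus zero stable maps of bounded degree whose images meet the support of $D_\pp$ in enough points to kill all one-parameter deformations of the enriched map $\widetilde{f}_\pp$; by the definition of \emph{rigid} adopted in Section \ref{sec: isolated} (infinitesimal deformations allowed, one-dimensional families forbidden), this enriched map is then rigid. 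A rigid genus zero stable map to a K3 surface has unobstructed lifting to characteristic zero along a suitable deformation of $\overline{X}$ (using the deformation theory recalled in Section \ref{sec: generalities} together with Artin's simultaneous resolution as in the proof of Theorem \ref{rational curve in linear system}). I would then descend the lifted map to a rational curve on some complex K3 surface in the deformation family of $X$; since the rigidifier contribution is of bounded degree while $\deg D_\pp \to \infty$, infinitely many distinct integral rational curves on the original $X$ are produced.

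The main obstacle, and where the technical heart of the paper must lie, is the construction of the rigidifiers themselves and the verification that $\widetilde{f}_\pp$ is genuinely rigid. One needs a reservoir of rational curves on $\overline{X}$ that lift to characteristic zero and that are positioned flexibly enough to pin down every node and every multiple component of $f_\pp$ with the prescribed intersection pattern. This is a delicate intersection-theoretic and moduli-theoretic problem, and it is also the stage at which the characteristic $p \geq 5$ hypothesis of the positive characteristic version enters, since Artin's simultaneous resolution and the non-supersingular hypothesis are needed to guarantee liftability of the ambient surface. Finally, I do not expect this strategy to settle the even Picard rank case: there the parity jump argument collapses, and a different source of high-degree rational curves on $\overline{X}$ would be required, which is why the stated conjecture is not proved here in full generality.
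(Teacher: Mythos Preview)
The statement you are addressing is a \emph{conjecture}; the paper does not prove it, and you correctly note that your outline only covers the odd Picard rank case (the paper's main Theorem). So the relevant comparison is between your sketch and the paper's proof of that theorem.

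Your outline has the right overall architecture (reduce to a number field, use the parity jump to produce high-degree $D_\idealq$ on reductions, rigidify, deform), but it skips two structural steps that the paper needs and that your version cannot do without.

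\textbf{(1) Rigidifiers need not live on $X_\idealq$.} You propose to attach rigidifiers directly on the reduction $\overline{X}=X_\idealq$. In the paper a rigidifier is a very specific object: the normalization of an \emph{ample} integral nodal rational curve (Definition~\ref{def:rigidifier}). There is no reason $X_\idealq$ carries one. Instead the paper first extends $D_\idealq$ along the Noether--Lefschetz type divisor $B_\idealq\subset U$ over which $\OO(D_\idealq)$ deforms, shows that the $B_\idealq$ form infinitely many distinct divisors flat over $\ZZ$, and uses the openness/density of the rigidifier locus $U_{2d}$ (Proposition~\ref{prop:rigidifier open}) to find a surface $Z\in B_\idealq\cap U_{2d}$. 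The rigidification (Theorem~\ref{rigidifiers produce rigid maps}) happens on $Z$, not on $X_\idealq$.

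\textbf{(2) The class must be a multiple of $H$ before you deform.} You rigidify $D_\idealq$ directly; but then the class of the resulting stable map has no reason to extend over the full $19$-dimensional moduli space $U$. The paper first chooses $m$ with $|mH-D|$ effective on $Z$, writes $mH-D$ as a sum of rational curves via Theorem~\ref{rational curve in linear system}, and only then rigidifies $D+\sum D_i+kR\in|(m+kr)H|$. Now the moduli space $\overline{\cal M}_0({\cal Y}/U,(m+kr){\cal H})$ is proper over $U$ and at least $19$-dimensional; rigidity at $[f_Z]$ (zero-dimensional fiber) forces the component through $[f_Z]$ to surject onto $U$, in particular to hit $X$. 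This dimension count, not any ``unobstructed lifting'' statement, is the mechanism.

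Finally, your bookkeeping ``rigidifiers have bounded degree while $\deg D_\idealq\to\infty$'' is not how the degree bound is obtained. The paper passes through a Stein factorization so that the lift $[f_X]$ specializes, modulo $\idealq$, to a stable map with the same image as a deformation of $[f_{X_\idealq}]$; hence the image of $f_X$ contains an integral rational curve $\widetilde{D}$ whose specialization contains $D_\idealq$, giving $\widetilde{D}\cdot H\geq D_\idealq\cdot H_\idealq\geq N$.
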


Chen \cite{chen} has shown that this is true for
very general complex projective K3 surfaces, 
but see also the discussion in \cite[Section 3]{bht}.
Moreover, let us mention the following reduction to the
case of number fields:

\begin{Theorem}[Bogomolov-Hassett-Tschinkel {\cite[Theorem 3]{bht}}]
 Assume that for every K3 surface $X$ defined over a number field $K$,
 there are infinitely many rational curves on 
 $$
   X_{\overline{\QQ}} \,:=\, X\otimes_K\overline{\QQ}\,.
 $$
 Then, Conjecture \ref{main conjecture} holds for algebraically closed
 fields of characteristic zero. \qed
\end{Theorem}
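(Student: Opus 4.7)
My plan is to realize $X$ within a Noether--Lefschetz locus containing K3 surfaces defined over $\oQQ$, apply the hypothesis there, and transfer rational curves back to $X$ by deformation.

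Concretely, I would fix a polarization $L$ on $X$ of some degree $2d$ and set $\Lambda := \NS(X)$. In the moduli space $M_{2d}$ of polarized K3 surfaces of degree $2d$, which is quasi-projective and defined over $\QQ$, consider the Noether--Lefschetz locus $\Sigma_\Lambda$ parametrizing pairs $(Y,M)$ with $\Lambda \hookrightarrow \NS(Y)$; once suitably marked, this is a closed algebraic subvariety of dimension $20-\mathrm{rank}(\Lambda)$, defined over $\oQQ$. The point $[X,L] \in M_{2d}(\CC)$ lies in some connected component of $\Sigma_\Lambda(\CC)$. Inside that component, I would choose an $\oQQ$-point $[X_0,L_0]$ with $\NS(X_0) = \Lambda$ exactly, i.e., a point outside the countable union of proper sub-Noether--Lefschetz loci of $\Sigma_\Lambda$ along which the Picard lattice jumps further. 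Existence of such an $X_0$ combines density of $\oQQ$-points with arithmetic constructions of K3 surfaces over number fields having prescribed Picard lattice (van Luijk's method for rank one, with extensions in higher rank).

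Applying the hypothesis to $X_0$ yields infinitely many integral rational curves on $X_0$. Their classes all lie in $\NS(X_0) = \Lambda$, and because on a K3 surface the moduli of genus-zero stable maps in a fixed class is generically zero-dimensional (the canonical class is trivial, so the virtual dimension vanishes), these curves must realize infinitely many distinct classes $\beta_1, \beta_2, \ldots \in \Lambda$. For each $\beta_i$, consider the relative moduli space of genus-zero stable maps of class $\beta_i$ attached to the universal family over the chosen component of $\Sigma_\Lambda$; this space is proper over its base by Kontsevich's construction. Its fiber over $[X_0]$ is non-empty, hence so is its fiber over $[X]$, producing a genus-zero stable map to $X$ in class $\beta_i$ whose image is a non-trivial sum of integral rational curves on $X$. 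Since the underlying classes $\beta_i$ span an infinite subset of $\Lambda$, the supporting integral rational curves on $X$ cannot all coincide, and $X$ carries infinitely many integral rational curves.

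The main obstacle is the second step: producing an $\oQQ$-valued point $X_0$ of $\Sigma_\Lambda$ with $\NS(X_0)$ equal to $\Lambda$ rather than strictly larger, inside the correct connected component containing $[X]$. The set of $\oQQ$-points of $\Sigma_\Lambda$ is itself only countable, while the sub-Noether--Lefschetz loci to be avoided also form a countable family of proper closed subvarieties, so avoidance is not automatic and must be ensured by explicit arithmetic input. If one cannot arrange $\NS(X_0) = \Lambda$ outright, a fallback is to allow $\NS(X_0) \supsetneq \Lambda$, apply the hypothesis, and then argue more carefully which of the resulting rational curve classes actually lie in $\Lambda$ and thus deform back to $X$ through the proper moduli of stable maps, possibly iterating over several choices of $X_0$ to accumulate enough classes in $\Lambda$.
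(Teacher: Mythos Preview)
The paper does not prove this theorem; it is quoted from \cite[Theorem~3]{bht} with a \qed, so there is no argument here to compare against beyond the reference.

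Your proposal has two unflagged gaps. First, the virtual dimension of $\overline{\mathcal M}_0(X,\beta)$ on a K3 surface is $-1$, not $0$: the formula gives $\int_\beta c_1(T_X)+(\dim X-3)(1-g)=0-1=-1$. The actual moduli is zero-dimensional because complex K3 surfaces are not uniruled, not because of a virtual count. Second, and more seriously, ``its fiber over $[X_0]$ is non-empty, hence so is its fiber over $[X]$'' does not follow from properness: a proper morphism can have image a proper closed subvariety of $\Sigma_\Lambda$ missing $[X]$. You need a dimension estimate---that the component through the rigid map on $X_0$ has dimension at least $\dim\Sigma_\Lambda$---which over the locus where $\beta$ stays algebraic comes from semiregularity/reduced obstruction theory (cf.\ \cite{ran semiregularity}, \cite[Theorem~19]{bht}), combined with zero-dimensionality of the fiber at $[X_0]$.

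The gap you do flag is the crux, and the argument of \cite{bht} is organized precisely to avoid it. Rather than placing $X$ as one closed point in a Noether--Lefschetz locus and searching for another $\oQQ$-point $X_0$ with prescribed N\'eron--Severi lattice, one spreads $X$ out as the geometric \emph{generic} fiber of a smooth family ${\cal X}\to S$ with $S$ of finite type over $\QQ$, and then specializes to a closed point $s\in S$. Now $X_s$ is a K3 over $\oQQ$ to which the hypothesis applies, the specialization map $\NS(X)\hookrightarrow\NS(X_s)$ is automatic, and rational curves on $X_s$ whose classes lie in the image (e.g.\ multiples of the polarization, which always extend over $S$) deform back to the generic fiber $X$ via the properness-plus-dimension argument above; this is also what makes Remark~\ref{bht remark} go through. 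Your framework runs the geometry in the opposite direction and therefore must manufacture $X_0$ with $\NS(X_0)=\Lambda$ from scratch---a genuinely harder arithmetic input that the spreading-out approach does not need.
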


\begin{Remark}
  \label{bht remark}
  Moreover, the proof of \cite[Theorem 3]{bht} shows that Conjecture \ref{main conjecture}
  holds for projective K3 surfaces with Picard rank $\rho_0$ over algebraically
  closed fields of characteristic zero if it holds for K3 surfaces with
  Picard rank $\rho_0$ over $\overline{\QQ}$.
\end{Remark}

\section{Rigid stable maps and rigidifiers}
\label{sec: isolated}

In this section we first give a criterion for a genus zero stable map to a projective 
K3 surface to be rigid.
Then, we introduce a class of stable maps, called {\em rigidifiers},
that has the property that given any sum of rational curves on a K3 surface, this sum
can be represented by a rigid stable map after adding sufficiently
many rigidifiers.
Finally, we show that surfaces with rigidifiers form an open and dense subset
inside the moduli space of polarized K3 surfaces.

\begin{Definition}
A morphism $f:C\to X$, where $C$ is a proper and connected curve
with at worst nodal singularities, 
is called {\em stable}, if ${\rm Aut}(f)$ is finite.
Here, ${\rm Aut}(f)$ denotes the automorphism group scheme of
all automorphisms of $C$ that commute with $f$.
\end{Definition}

For a projective K3 surface $(X,H)$, there exists a moduli space of stable maps \cite{kontsevich}.
Various algebraic constructions are discussed in \cite{abramovich vistoli},
and a formal Artin stack over the formal deformation space of K3 surfaces
is constructed in the proof of \cite[Theorem 19]{bht}. 
In this paper, without further mentioning, all domain curves of stable maps
will have arithmetic genus zero. 
For an integer $\beta$, we denote by $\overline{{\cal M}}_{0}(X,\beta)$ the moduli 
stack of genus zero 
stable maps $[f,C]$ of degree $\beta$, i.e., stable genus zero
maps $f:C\to X$ with $\deg f_\ast[C]=\beta$.

\begin{Definition}
Let $[f,C]$ be a stable map and $D$ a sum of rational curves.
\begin{enumerate}
  \item  We call $[f,C]$ {\em rigid} if $\overline{{\cal M}}_{0}(X, \beta)$
    is zero-dimensional at $[f]$.
  \item We say that the rational curve $D=\sum_i D_i$ on $X$ has a 
     {\em rigid representative} (by a stable map) if there 
     exists a rigid stable map $[f,C]\in \overline{{\cal M}}_{0}(X,\beta)$
     such that $f_\ast [C]=[D]$.
\end{enumerate}
\end{Definition}

\begin{Remark}
  Here, we use the word ``rigid'' in the most liberal manner, i.e.,
  for a rigid stable map $[f,C]$ the morphism $f$ may admit infinitesimal
  but no one-dimensional deformations in $\overline{{\cal M}}_{0}(X,\beta)$.
\end{Remark}

For example, any integral rational curve $D$ on
a non-supersingular K3 surface $X$ has a rigid representative, namely
via its normalization $\nu:\widetilde{D}\to D\subseteq X$.
On the other hand, no multiple of an irreducible smooth
rational curve has a rigid representative, as any representative
$[f]$ must involve multiple covers that deform and give rise to a positive 
dimensional component of $\overline{{\cal M}}_{0}(X,\beta)$ through $[f]$.

Let us now introduce some useful notions for genus zero stable maps.

\begin{Definition}
 Let $[f,C]$ be a stable map.
 \begin{enumerate} 
   \item An irreducible component $\Sigma\subseteq C$ is 
      a {\em ghost-component} if $f(\Sigma)$ is a point.
   \item Two irreducible components $\Sigma_1,\Sigma_2\subseteq C$ are   
       {\em adjacent} if either $\Sigma_1\cap\Sigma_2\neq\emptyset$ or if
       they are connected by a chain of ghost-components of $C$. 
   \item For two adjacent components $\Sigma_1$ and $\Sigma_2$ we call 
        $p_i\in \Sigma_i$  their {\em intersection points} 
        if $p_i\in \Sigma_i$ is the intersection $\Sigma_1\cap \Sigma_2$
        if non-empty, or if $p_i=\Sigma_i\cap B$ for some chain  
        $B\subseteq C$ of ghost-components.
 \end{enumerate} 
\end{Definition}

Let $\Sigma_1,\Sigma_2\subseteq C$ be two non-ghost adjacent components, 
and let $p_i\in \Sigma_i$ be their intersection points. 
Let $\widehat{\Sigma}_i$ be the formal completion of $\Sigma_i$ at $p_i$ and 
denote by $\widehat{f}_i: \widehat{\Sigma}_i\to X$ the induced morphism.

\begin{Definition}
\label{adj}
  Two adjacent non-ghost components $\Sigma_1$ and $\Sigma_2$ intersect
  {\em properly at their intersection in $X$} if for
  $p_i\in\Sigma_i$ and $[\widehat{f}_i,\widehat{\Sigma}_i]$ just
  mentioned, $\widehat{f}_1^{-1}(\widehat{f}_2(\widehat{\Sigma}_2))
  \subseteq\widehat{\Sigma}_1$ is non-trivial and zero-dimensional.
\end{Definition}

Next, we establish a criterion for a stable map to be rigid.
Although we will apply it later only to genus zero stable maps that 
contain no ghost-components, we prove a more general result, which may
be useful in future applications.

\begin{Lemma} 
\label{rigid}
Let $X$ be a non-supersingular K3 surface.
Let $[f,C]$ be a genus zero stable map to $X$. 
Then $[f,C]$ is rigid if the following conditions hold:
\begin{enumerate}
\item for every non-ghost-component $\Sigma\subseteq C$, $f|_{\Sigma}:\Sigma\to f(\Sigma)$ 
is birational,
\item ghost-components of $[f,C]$ are disjoint, and every ghost-component contains
exactly three nodal points of $C$, and  
\item any two adjacent non-ghost-components intersect properly at their intersection in $X$.
\end{enumerate}
\end{Lemma}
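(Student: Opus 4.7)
The plan is to show that every one-parameter family of stable maps deforming $[f,C]$ is isotrivial, i.e., induces a constant map to $\overline{{\cal M}}_0(X,\beta)$ on a neighborhood of the base point. The central geometric input is that a non-supersingular K3 surface is not uniruled, so every flat family of integral rational curves on $X$ is isotrivial as a family of subvarieties of $X$; this is the one place where the hypothesis on $X$ enters.

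Given a deformation $(f_t,C_t)$ with $(f_0,C_0)=(f,C)$ parameterized by a pointed smooth curve $(T,0)$, I would first use condition~(1) and upper semicontinuity to see that each non-ghost component $\Sigma_t$ of $C_t$ maps birationally onto an integral rational curve in $X$; by the rigidity of integral rational curves this image must equal $D_\Sigma:=f(\Sigma)$ for all $t$. Next, a smoothing of a node $p=\Sigma_1\cap\Sigma_2$ between two non-ghost components would yield a family of irreducible rational curves specializing to the reducible $D_{\Sigma_1}\cup D_{\Sigma_2}$, again contradicting rigidity, so the dual graph of $C_t$ is constant. Ghost components are rigid by condition~(2), since $\Po$ with three marked points has no moduli. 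To fix the node positions on non-ghost components I invoke condition~(3): the intersection point $p_1\in\Sigma_1$ is constrained to the formally zero-dimensional locus $\widehat{f}_1^{-1}(\widehat{f}_2(\widehat{\Sigma}_2))$, hence to finitely many points in a formal neighborhood of $p_1$, forcing $p_1^t$ to be constant by continuity in $t$. With the combinatorial type of $C_t$, the images $f_t(\Sigma_t)=D_\Sigma$, and the positions of all nodes on non-ghost components fixed, the map $f_t|_{\Sigma_t}$ is the normalization of $D_\Sigma$ matching the prescribed node positions, which yields an isomorphism of stable maps $(f_t,C_t)\iso (f,C)$ for $t$ near $0$.

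The main obstacle I anticipate is handling the $\Aut(\Po)$-reparametrization ambiguity on each non-ghost component: for components carrying $\geq 3$ marked nodes this ambiguity is rigidified by the fixed images of the nodes, but for components with fewer nodes one must argue that the residual reparametrization freedom is absorbed into the (finite) automorphism group of the stable map $(f,C)$, and so produces no positive-dimensional subscheme through $[f,C]$. A secondary subtlety is the characteristic-$p$ non-uniruledness of non-supersingular K3 surfaces used in the first step; this should be invoked explicitly, e.g., via a lifting-to-characteristic-zero argument.
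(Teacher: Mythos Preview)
Your overall strategy---take a one-parameter family and show it is constant---matches the paper's, and your use of non-supersingularity to freeze the image of each non-ghost component is the right idea. However, there is a genuine gap in your argument that nodes between non-ghost components cannot smooth.

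You write that smoothing a node $p=\Sigma_1\cap\Sigma_2$ ``would yield a family of irreducible rational curves specializing to the reducible $D_{\Sigma_1}\cup D_{\Sigma_2}$, again contradicting rigidity.'' This works when $D_{\Sigma_1}\neq D_{\Sigma_2}$, but condition~(3) does \emph{not} exclude $D_{\Sigma_1}=D_{\Sigma_2}=:D$: it only forces the two formal branches at $f(p)$ to be distinct, which is perfectly compatible with both components mapping birationally to the same nodal curve $D$, hitting different branches at a node of $D$. In that situation the image $f_t(\Sigma_t)$ is constantly $D$, so no moving rational curve arises and your rigidity-of-images argument says nothing. This case is not pathological---it is exactly what occurs in the paper's main construction, where many copies of the rigidifier are glued to one another. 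Relatedly, your ``upper semicontinuity'' claim that each non-ghost $\Sigma_t$ maps birationally already presupposes that $\Sigma_t$ specializes to a single component of $C_0$; if a node between two components with common image $D$ smooths, the general $\Sigma_t$ maps with degree $\geq 2$ onto $D$, and nothing you have said rules this out.

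The paper closes this gap by working with irreducible components ${\cal C}_a$ of the \emph{total space} ${\cal C}$ over the base $S$, and using that ${\cal C}_a$ is normal (since $S$ is smooth and the fibers are nodal). If the special fiber ${\cal C}_{a,0}$ contained two adjacent non-ghost components $\Sigma_1,\Sigma_2$, both mapping into $R_a=f({\cal C}_a)$, then normality of ${\cal C}_a$ lets one lift $f|_{{\cal C}_a}$ to the normalization $\widetilde{R}_a$; this forces the lifts of the intersection points $p_1\in\Sigma_1$ and $p_2\in\Sigma_2$ to coincide in $\widetilde{R}_a$. But condition~(3) says precisely that they lie on different branches, hence on distinct points of $\widetilde{R}_a$---a contradiction. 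This normalization-lift argument is the missing ingredient in your proposal.
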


\begin{proof} 
Let $f:{\cal C}\to X$ with ${\cal C}\to S$ be an $S$-family of stable maps
over a smooth and irreducible curve $S$.
Assume that the special fiber $f_0: {\cal C}_0 \to X$ over $0\in S$ 
satisfies the conditions (1) - (3).
We will show that $f$ is a constant family of stable maps
over an open and dense neighborhood of $0\in S$.

We denote by ${\cal C}_a$, $a\in A$ the irreducible components of $\cal C$. 
For a general closed point $s\in S$ and $a\in A$ we consider the fiber 
${\cal C}_{a,s}:={\cal C}_a\times_S s$.
Since $\cC_{a,0}$ has arithmetic genus $0$, each $\cC_{a,s}$ is 
isomorphic to $\PP^1$.

We distinguish two cases: 
first, assume that $f({\cal C}_{a,s})$ is a curve, which is necessarily irreducible.
Since the image of $f$ is a rational curve, and $X$ is a non-supersingular K3 surface,
the image does not move, i.e., does not depend on $s$ for all $s\neq 0$.
We denote this image by $R_a$. We claim that $f: \cC_{a,s}\to R_a$ is birational.

Indeed, since ${\cal C}_a$ is proper and flat over $S$, we conclude $f({\cal C}_{a,0})=R_a$. 
We next show that ${\cal C}_{a,0}$ contains only one non-ghost-component:
let $\Sigma_1,\ldots,\Sigma_r$ be the non-ghost-components of ${\cal C}_{a,0}$.
We have $R_a=f({\cal C}_a)$ and denote by $\widetilde{R}_a$ its normalization. 
Then $f:\Sigma_i\to R_a$ lifts uniquely to $h_i: \Sigma_i\to \widetilde{R}_a$. 
After possibly reindexing, we may suppose that $\Sigma_1$ and $\Sigma_2$ are adjacent.
By condition (3), they intersect properly at $p_1\in\Sigma_1$ and 
$p_2\in\Sigma_2$ under $f$. 
Let $q=f(p_1)=f(p_2)\in R_a$, and let $\widehat{R}_a$ be the formal completion 
of $R_a$ at $q$. 
By the proper intersection assumption, the images 
$h_1(\widehat{\Sigma}_1)$ and $h(\widehat{\Sigma}_2)$ do not lie
on the same branch of $\widehat{R}_a$, which shows that
$h_1(p_1)$ and $h_2(p_2)$ are distinct.

On the other hand, since $S$ is smooth, ${\cal C}_a$ is normal. 
Thus, $f:{\cal C}_a\to R_a$ 
uniquely lifts to $\widetilde{f}_a: {\cal C}_a\to\widetilde{R}_a$.
Since $\widetilde{f}_a|_{\Sigma_i}$ coincides with 
$h_i|_{\Sigma_i}$ at general points of $\Sigma_i$, 
they are identical. 
Therefore, 
$h_1(p_1)=\widetilde{f}_a(p_1)=\widetilde{f}_a(p_2)=h_2(p_2)$,
which contradicts $h_1(p_1)\neq h_2(p_2)$. 
This proves that ${\cal C}_a$ contains precisely one 
non-ghost-component. 
Therefore, by assumption (1), $f: \cC_{a,0}\to R_a$ is generically bijective. 
Since $f$ is flat over $S$, $f: \cC_{a,s}\to R_a$ is also generically bijective, 
and we conclude birationality.

The second case is when $f|_{\cC_{a,s}}$ is a constant map. 
Since $f$ is flat, $f|_{\cC_{a,0}}$ is a constant map, too. 
By assumption (2), $\cC_{a,0}$ is irreducible, and therefore contains exactly three 
nodal points of $\cC_0$. 
This proves that for general $s\in S$, $\cC_{a,s}$ contains three nodes of $\cC_a$.

We now show that for a Zariski open and dense subset $U\subseteq S$, 
$f|_{\cC_U}$ is a constant family of stable maps, where 
$\cC_U:=\cC\times_S U$.
By the previous discussion, we find a
dense open subset $U\subseteq S$ so that 
$\cC_{a,U}=\cC_a\times_S U\cong \PP^1\times U$. 
We next study the nodal points of $\cC_s$. Let $T_{ab}=\cC_{a}\cap \cC_{b}$. 
Since $\cC$ is a family of arithmetic genus zero curves,
$T_{ab}$ is either a section of $\cC\to S$ or empty. 
Let $\pi: \cC\to S$ be the projection.

Suppose $T_{ab}\ne\emptyset$ and that both, $\cC_a$ and $\cC_b$, are not families 
of ghost-components.
We set $\cC_{ab}:=\cC_a\cup\cC_b\subseteq \cC$, and conclude that
$(f,\pi): \cC_{ab}\to (R_a\cup R_b)\times S$ is generically finite.
We denote by $\cC_{ab}^{\text{st}}$ the contraction of the exceptional divisor of
$(f,\pi): \cC_{ab}\to X\times S$. 
Then $\cC_{ab}^{\text{st}}=(\PP^1\sqcup \PP^1)\times S$,
where $\PP^1\sqcup\PP^1$ denotes the union of two $\PP^1$'s intersecting at one point.
Applying assumption (3), we see that the two irreducible components of $\cC_{ab}^{\text{st}}\times_S 0$
intersect properly at their intersection in $X$. 
By the same argument as in proving that each $\cC_a$ contains at most
one non-ghost component, 
we conclude that $f(T_{ab})\subseteq\text{Sing}(R)$, where $R=\cup_a R_a$ with the
reduced structure. 
Using $\cC_{a,U}\cong \widetilde{R}_a\times U$ from above, we see that
$$
T_{ab}\times_S U\,\subseteq\, \cC_{a,U}\,\cong\, \widetilde{R}_a\times U
$$ 
is a constant section. 

In case $T_{ab}\ne\emptyset$ and $\cC_b$ is a family of ghost components, 
(by assumption (2), $\cC_a$ cannot be a family of ghost-components,)
there must be a third component $\cC_c$ so that $T_{bc}\ne \emptyset$. 
We set $\cC_{ac}:=(\cC_a\cup\cC_c)/\sim$, where $\sim$ means that we 
identify $T_{ab}\subset \cC_a$ with $T_{bc}\subset \cC_c$. 
The morphism $f$ restricted to $\cC_a$ and $\cC_c$ 
defines an $S$-family of stable maps $f_{ac}: \cC_{ac}\to X$.
By the same arguments as before, we conclude that 
$T_{ab,U}\subset \cC_{a,U}\cong \widetilde{R}_a\times U$
is a constant family.

We conclude that the restricted family $f: \cC|_U\to X$ is a constant family 
of stable maps over $U$. 
Finally, for all $s\in U\cup\{0\}$, the restriction of $f_s$ to a 
non-ghost-component is birational onto its image, and so 
$f_s$ is tame.
Since the moduli space of tame stable maps is separated, 
the family $f:\cC\to X$ is constant over an open and dense neighborhood 
of $0\in S$.
\end{proof}

We now come to the main definition of this section, which 
will be motivated by Theorem \ref{rigidifiers produce rigid maps} below.

\begin{Definition}
 \label{def:rigidifier}
 A {\em rigidifier} is a morphism $f:\PP^1\to X$ to a surface, where
 \begin{enumerate}
  \item $f:\PP^1\to D:=f_\ast\PP^1$ is the normalization morphism,
  \item $D$ is an integral rational curve with only simple nodes as singularities, and
  \item the class $f_\ast[\PP^1]$ is ample.
 \end{enumerate}
 In particular, $[f,\PP^1]$ is a genus zero stable map.
\end{Definition}

We recall that
the moduli space ${\cal M}_{2d}$ of polarized K3 surfaces of degree $2d$
exists as separated Deligne-Mumford stack of finite type over $\Spec\ZZ$,
which is even smooth over $\Spec\ZZ[\frac{1}{2d}]$, see \cite{rizov}.
For every integer $d\geq1$, we define 
\[
  U_{2d}\,:=\,\left\{ 
  (X,H)\in{\cal M}_{2d}\,\Big|\,
  \begin{array}{l}
    \mbox{ there exists an $n\in\NN$ such that $|nH|$ contains } \\
    \mbox{ a curve that can be represented by a rigidifier}
  \end{array}
  \right\}\,.
\]
Before coming to the main result of this section, 
we establish existence and openness of these stable maps.

\begin{Proposition}
 \label{prop:rigidifier open}
 For every $d\geq1$, the set $U_{2d}$ is Zariski-open and dense in 
 ${\cal M}_{2d}$. 
 Moreover, $U_{2d}$ is of finite type over $\Spec\ZZ$.
\end{Proposition}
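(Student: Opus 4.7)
The plan is to exhibit a rigidifier on a very general complex K3 surface via Chen's theorem, and then to prove openness by a direct deformation-theoretic computation showing that the normal sheaf of a rigidifier is $\mathcal{O}_{\PP^1}(-2)$, so its $H^0$ vanishes and the Severi variety becomes \'etale over $\mathcal{M}_{2d}$ at the rigidifier. Density and the finite-type assertion will then follow from standard properties of $\mathcal{M}_{2d}$.

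I would first establish non-emptiness by taking a very general $(X_0, H_0) \in \mathcal{M}_{2d}(\CC)$. By Chen's theorem \cite{chen}, $X_0$ carries integral nodal rational curves in infinitely many linear systems, so in particular there exist $n \geq 1$ and such a curve $D_0 \in |nH_0|$. Its normalization $f_0\colon \PP^1 \to D_0 \hookrightarrow X_0$ satisfies the three conditions of Definition \ref{def:rigidifier} --- normalization, integral with only nodes, and ample class $n[H_0]$ --- so $f_0$ is a rigidifier and $(X_0, H_0) \in U_{2d}$.

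For openness, I would fix $(X_0, H_0) \in U_{2d}$ with rigidifier $f_0$ of image $D_0 \in |nH_0|$, and set $\delta := p_a(D_0) = n^2 d + 1$, the number of nodes of $D_0$. Consider the relative Hilbert scheme $\pi\colon \mathcal{H}_n \to \mathcal{M}_{2d}$ of divisors in $|nH|$ (a projective bundle of relative dimension $\delta$) and its locally closed substack $V_n$ of integral nodal rational curves. Imposing the $\delta$ nodes of $D_0$ as conditions in $\mathcal{H}_n$ gives the standard Severi-type codimension bound
\[
  \dim_{(X_0, D_0)} V_n \;\geq\; \dim \mathcal{H}_n - \delta \;=\; \dim \mathcal{M}_{2d}.
\]
Since $f_0$ is the unramified normalization of $D_0$, the tangent space to the fiber of $\pi|_{V_n}$ at $(X_0, D_0)$ --- i.e., the space of first-order equisingular deformations of $D_0$ in $X_0$ --- is $H^0(\PP^1, N_{f_0})$, where
\[
  0 \to T_{\PP^1} \to f_0^\ast T_{X_0} \to N_{f_0} \to 0.
\]
A direct degree computation using $c_1(T_{X_0}) = 0$ yields $\deg N_{f_0} = -2$, so $N_{f_0} \cong \mathcal{O}_{\PP^1}(-2)$ and $H^0(\PP^1, N_{f_0}) = 0$. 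Therefore the kernel of the tangent map of $\pi|_{V_n}$ at $(X_0, D_0)$ is trivial, giving $\dim T_{(X_0, D_0)} V_n \leq \dim \mathcal{M}_{2d}$. Combined with the lower bound this forces $V_n$ to be smooth of dimension $\dim \mathcal{M}_{2d}$ at $(X_0, D_0)$ and the tangent map of $\pi|_{V_n}$ there to be an isomorphism; thus $\pi|_{V_n}$ is \'etale at $(X_0, D_0)$, and its image contains a Zariski open neighborhood of $[X_0, H_0]$ contained in $U_{2d}$.

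Finally, since $\mathcal{M}_{2d}$ is irreducible and of finite type over $\Spec \ZZ$ \cite{rizov}, its non-empty open substack $U_{2d}$ is automatically dense and of finite type over $\Spec \ZZ$. The hard part will be ensuring that both dimension bounds hold at an arbitrary rigidifier: the lower bound relies on the standard codimension count for Severi-type loci, and the upper bound on the unramifiedness of the normalization together with the degree computation of $N_{f_0}$. A separate issue not directly addressed by this argument is the construction of rigidifiers in positive characteristic, but for the statement of the proposition, non-emptiness in characteristic zero combined with flatness of $\mathcal{M}_{2d}/\ZZ$ suffices.
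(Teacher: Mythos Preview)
Your argument is correct and takes a genuinely different route from the paper's. For openness, the paper first argues that rigidifiers are rigid stable maps (using \cite{tate} to handle the supersingular case), then invokes the fact from \cite[Theorem 19]{bht} that the component of $\overline{\cal M}_0({\cal Y}/U,n{\cal H})$ through a rigid point is proper and surjects onto the base, and finally observes that ``image is integral'' and ``image has only nodes'' are open conditions along this component. You instead compute $N_{f_0}\cong\OO_{\PP^1}(-2)$ directly from unramifiedness of the normalization of a nodal curve and $c_1(T_X)=0$, deduce that the relative Severi locus $V_n$ has trivial vertical tangent space, and combine this with the Severi codimension bound to conclude that $V_n\to{\cal M}_{2d}$ is \'etale at any rigidifier. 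Your approach is more elementary and self-contained: it avoids the semiregularity input from \cite{bht} and the separate \cite{tate} argument, and in fact yields the stronger statement $H^0(N_{f_0})=0$ (no infinitesimal deformations at all, not merely no one-parameter families), which immediately gives rigidity even on supersingular surfaces. The paper's route, on the other hand, packages the surjectivity of the stable-map component in a form that is reused verbatim in the proof of the main theorem. Two small points: your upper bound $\dim T_{(X_0,D_0)}V_n\leq\dim{\cal M}_{2d}$ tacitly uses smoothness of ${\cal M}_{2d}$, which \cite{rizov} gives only over $\ZZ[\frac{1}{2d}]$; and your appeal to irreducibility of ${\cal M}_{2d}$ for density is correct but not stated in \cite{rizov} --- it follows from smoothness over $\ZZ[\frac{1}{2d}]$ together with the classical irreducibility of the complex moduli space.
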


\begin{proof}
Non-emptiness of $U_{2d}$ for all $d\geq1$ follows from \cite[Theorem 1.2]{chen}.

Given a family $({\cal X},{\cal H})\to S$ of polarized K3 surfaces and a
rigid stable map of genus zero $f_b:C\to{\cal X}_b$ with 
$f_{b \ast}C\in|nH_b|$ for some $n\in\NN$, it follows from
the proof of \cite[Theorem 19]{bht} that the component of
$\overline{{\cal M}}_0({\cal X}/S, n{\cal H})$ that contains
$[f_b]$ is proper and surjective over $S$.
(In the analytic setting this follows from \cite{ran semiregularity}.)

By definition, the image $D\subset X$ of a rigidifier is an irreducible curve
with only simple nodes as singularities.
Clearly, if $X$ is not unirational, then there is no 
one-dimensional and non-trivial family of rational curves containing $D$.
But even if $X$ is unirational, which may happen in positive 
characteristic, the generic member of such a family must
have unibranch singularities by \cite{tate}, and thus cannot
contain $D$ as special member.
We conclude that rigidifiers are rigid stable maps and thus extend over ${\cal M}_{2d}$.

The image of a rigidifier stable map is an integral
curve, which is an open property.
Next, having only simple nodes as singularities
is an open property: the arithmetic genus of the
image curve is constant in flat families, and so 
singularities cannot smoothen out.
This shows that $U$ is open.

Being open in a Noetherian Deligne-Mumford stack
of finite type over $\ZZ$, also $U$ is of finite type over $\ZZ$.
\end{proof}


\begin{Theorem}
 \label{rigidifiers produce rigid maps} 
 Let $X$ be a non-supersingular K3 surface over an algebraically closed field.
 Let $D_1,...,D_m$ be integral rational curves on $X$, not necessarily distinct,
 and let $[f,\PP^1]$ be a rigidifier.
 Then, for some $k\leq m$
 \[
   D_1\,+\,...\,+\,D_m\,+\, k\cdot f_\ast \PP^1 
 \]
 has a rigid representative.
\end{Theorem}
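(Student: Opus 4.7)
The plan is to apply Lemma \ref{rigid} by explicitly constructing a connected genus-zero nodal curve $C$ together with a morphism $g\colon C\to X$ that pushes forward to the cycle $\sum_i D_i + k[D_0]$, where $D_0 := f_\ast\PP^1$. Since the construction will use no ghost-components, condition (2) of Lemma \ref{rigid} is vacuous, and only conditions (1) and (3) need to be verified.

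The components of $C$ will be of two kinds: the normalizations $\nu_i\colon \tilde D_i\to D_i$ (each $\tilde D_i\cong\PP^1$, since $D_i$ is an integral rational curve), and some number $k$ of copies of the rigidifier source $\PP^1$, each mapped to $D_0$ via $f$. The map $g$ restricts to $\nu_i$ on each $\tilde D_i$ and to $f$ on each rigidifier copy; thus $g$ is birational onto its image on every component, so condition (1) of Lemma \ref{rigid} is automatic.

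I would build $C$ inductively, attaching one component at a time along a single node, starting with one rigidifier copy. At step $i$, distinguish two cases. If $D_i\neq D_0$, pick a point $p_i\in D_0\cap D_i$ (non-empty by ampleness of $D_0$) that is not already a node of $C_{i-1}$ and glue $\tilde D_i$ to some rigidifier copy in $C_{i-1}$ at preimages of $p_i$; the formal branches of $D_i$ and $D_0$ at $p_i$ are distinct, because $D_i\neq D_0$ are distinct integral curves on the smooth surface $X$, so condition (3) of Lemma \ref{rigid} holds at the new node. If $D_i = D_0$, then $\tilde D_i$ is effectively a further copy of the rigidifier source; attach it either (a) to an existing $\tilde D_j$ with $D_j\neq D_0$ at a point of $D_0\cap D_j$ (distinct branches automatically), or (b) to an existing rigidifier copy at a node $p$ of $D_0$, using preimages on the two opposite branches of $D_0$ at $p$ — so the two formal branches in $X$ differ, again giving condition (3). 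If at some step no valid attachment point remains on $C_{i-1}$, insert one fresh rigidifier copy and glue it to $C_{i-1}$ by the same rules before attaching $\tilde D_i$. At most one new rigidifier is added per inductive step, so the total number $k$ of extra rigidifiers satisfies $k\leq m$.

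The main obstacle is the book-keeping in case (b): when many $D_i$ equal $D_0$, one must ensure enough distinct branch-preimages exist on the existing rigidifier-type components to support all the required gluings. This is guaranteed by the fact that $D_0$ is an integral nodal rational curve with $D_0^2\geq 2$ on a K3 surface (the intersection form is even and $D_0$ is ample), so by adjunction $p_a(D_0) = D_0^2/2 + 1\geq 2$, i.e.\ $D_0$ has at least two nodes; each such node contributes two preimages on every rigidifier-type component, providing ample flexibility. Once the construction finishes, $[g,C]$ is a connected genus-zero stable map with $g_\ast[C] = D_1+\cdots+D_m + k\,f_\ast\PP^1$, all three hypotheses of Lemma \ref{rigid} are met by construction, and hence $[g,C]$ is a rigid representative of the desired class.
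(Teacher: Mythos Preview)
Your proposal is correct and follows essentially the same approach as the paper: an explicit tree of $\PP^1$'s is built, each component mapping birationally onto either some $D_i$ or onto $D_0=f_\ast\PP^1$, with adjacent components glued so that their formal branches in $X$ differ, and then Lemma \ref{rigid} is invoked. Both arguments hinge on the same two facts you isolate --- ampleness of $D_0$ (so $D_0\cap D_i\neq\emptyset$) and $p_a(D_0)\geq 2$ (so $D_0$ has at least two simple nodes, giving at least four branch-preimages on every rigidifier copy).

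The only real difference is organizational. The paper first lays down a \emph{fixed backbone}: a chain $C_1,\ldots,C_m$ of $m$ rigidifier copies, glued consecutively at opposite-branch preimages of a chosen node of $D_0$, and then attaches each $\tilde D_i$ to its own dedicated $C_i$ (or simply omits $\tilde D_i$ when $D_i=D_0$, since $C_i$ already represents it). This makes the edge cases transparent: each $C_i$ carries at most three nodes, and the paper's cases (2a)/(2b) show how to reroute when the desired gluing point on $C_i$ collides with the backbone node, using the second node of $D_0$. Your incremental variant (start with one rigidifier, insert fresh ones on demand) reaches the same conclusion and can give a smaller $k$, but the assertion ``insert one fresh rigidifier copy and glue it to $C_{i-1}$ by the same rules'' deserves one more sentence of justification: you should say explicitly that the most recently added rigidifier-type component has at most two of its $\geq 4$ node-preimages occupied, so a free opposite-branch pair is always available. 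With that clarified, your bound $k\le m$ follows exactly as you indicate (no extra rigidifier is needed at step~$1$, and at most one at each of the remaining $m-1$ steps).
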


\begin{proof}
Let us explicitly construct the rigid representative:

First, we choose $q_1,q_2\in\PP^1$ such that $q_1,q_2$ are distinct points
mapping to the same node $f(q_1)=f(q_2)$.
Then, we take $m$ copies $[f_i,C_i]$, $i=1,...,m$ of $[f,\PP^1]$ and construct
a new stable map $[\widetilde{f},\widetilde{C}]$
by connecting $q_1\in C_i$ to $q_2\in C_{i+1}$ for all $i=1,...,m-1$.
We note that $q_1\in C_i$ and $q_2\in C_{i+1}$ intersect properly in $X$.

For all $i$, we represent $D_i$ via the stable map $[\nu_i,\PP^1]$
coming from the normalization $\nu_i:\PP^1\to D_i$.
Next, we insert $[\nu_i,\PP^1]$ into $[\widetilde{f},\widetilde{C}]$
by attaching it to $[f_i,C_i]$, as follows:
since $f_\ast\PP^1$ is ample, it intersects every $D_i$. 
Then, at least one of the following cases is fulfilled, which
gives us a recipe to build $\nu_i:\PP^1\to D_i$ into the rigid stable
map we want to construct.

(1) Assume that $D_i=\nu_{i,\ast}\PP^1$ is not equal to $f_\ast\PP^1$ and that
$\nu_i(\PP^1)$ intersects $\nu_i(C_i)$ in $\nu_i(p)=f_i(q)$, say.
If $q\in C_i$ is a smooth point of $\widetilde{C}$, then
we simply add $[\nu_i,\PP^1]$ to $[f_i,C_i]$ by connecting $q$ to $p$.

(2) Assume that $D_i=\nu_{i,\ast}\PP^1$ is not equal to $f_\ast\PP^1$, and that
$\nu_i(\PP^1)$ intersects $\nu_i(C_i)$ in $\nu_i(p)=f_i(q)$. 
But now, assume that $q\in C_i$ is a node of $\widetilde{C}$.
\begin{enumerate}
 \item[(a)]  If $q=C_{i-1}\cap C_i$ then we let $q'$ be the other point of $C_i$
   mapping to the node $f_i(q)$.
   If $q'$ is not a node of $\widetilde{C}$, we add $[\nu_i,\PP^1]$ to
   $[f_i,C_i]$ by connecting $q'$ to $p$.
   However, if $q'$ is a node of $\widetilde{C}$ then it connects
   $C_i$ with $C_{i+1}$.
   In this case, we disconnect $C_i$ and $C_{i+1}$, and connect
   $[\nu_i,\PP^1]$ to $[f_i,C_i]$ by connecting $q'$ to $p$.
   Since $f_\ast\PP^1$ is a nodal rational curve of arithmetic genus
   $p_a=1+H^2/2\geq2$, it has at least two nodes.
   We use this second node to connect $C_i$ again to $C_{i+1}$.
  \item[(b)] If $q=C_i\cap C_{i+1}$ then we disconnect $C_i$ and $C_{i+1}$,
      connect $[\nu_i,\PP^1]$ to $[f_i,C_i]$ by connecting $q$ to $p$,
      and use another node of $f_\ast\PP^1$ to connect $C_i$  
      to $C_{i+1}$.
\end{enumerate}

(3) Finally, assume that $D_i=\nu_{i,\ast}\PP^1$ is equal to $f_\ast\PP^1$.
Then, we simply leave out $[\nu_i,\PP^1]$ as it is already included
in $[\widetilde{f},\widetilde{C}]$ via $[f_i,C_i]$.

Inspecting the previous construction, we see that the conditions
of Lemma \ref{rigid} are fulfilled.
In particular, $[\widetilde{f},\widetilde{C}]$ is a rigid stable map.
Moreover, by construction $\widetilde{f}_\ast\widetilde{C}$ equals
$\sum_{i=1}^m D_i+k\cdot f_\ast\PP^1$ for some $k\leq m$.
\end{proof}

\begin{figure}[htbp]
\begin{center}
 \scalebox{0.6}{\input{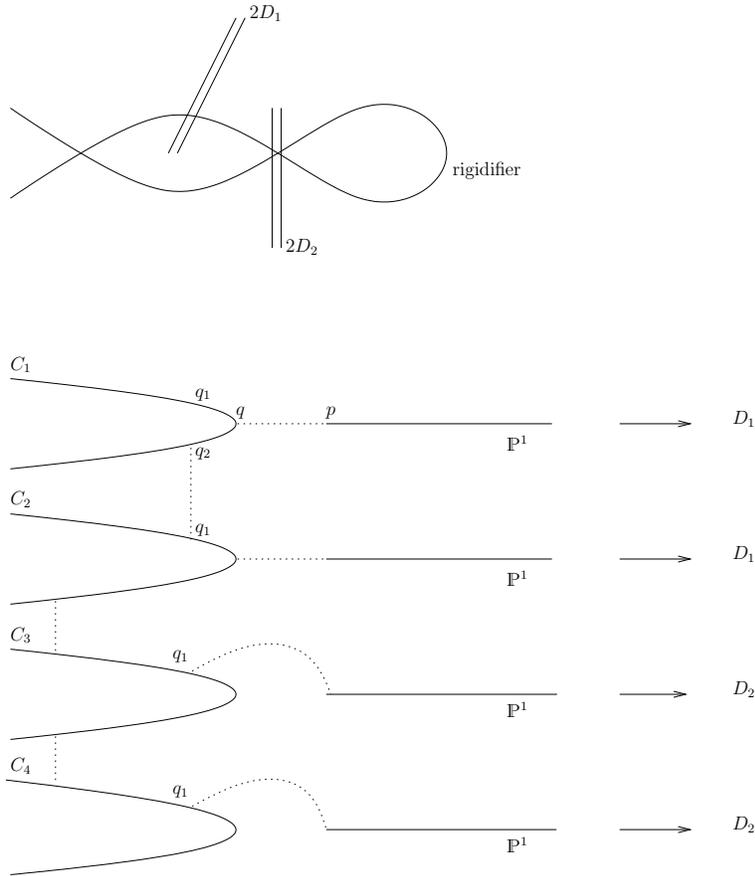}}
\caption{To rigidify $2D_1+2D_2$ as shown above, we glue the curves below 
  along the dotted arrows to construct the desired rigid stable representative.
  The four $\PP^1$'s are mapped to $D_1$ and $D_2$ as indicated; all $C_i$
  are mapped to the rigidifier.}
\label{figure:example}
\end{center}
\end{figure}

\section{Infinitely Many Rational Curves}
\label{sec: main}

In this section we prove that complex projective K3 surfaces with 
odd Picard rank contain infinitely many rational curves, which is our main result.
We also establish it for non-supersingular K3 surfaces in characteristic $p\geq5$.

By \cite{artin mazur}, the formal Brauer
group of a K3 surface is a smooth and $1$-dimensional formal group.
In positive characteristic, its height $h$ satisfies $1\leq h\leq10$
or $h=\infty$.
Surfaces with $h=1$ are called {\em ordinary} and this property is open
in families of equal characteristic, 
whereas surfaces with $h=\infty$ are called {\em supersingular}.
If a K3 surface contains a moving family of rational curves,
then it is uniruled, and in particular, supersingular.

\begin{Theorem}[Bogomolov-Zarhin, Joshi-Rajan, Nygaard-Ogus]
 \label{thm bzno}
 Let $X$ be a K3 surface over a number field $K$. 
 Then,
 \begin{enumerate}
  \item for all but finitely many places $\idealp$ of $K$, the reduction $X_\idealp$
     is smooth,
  \item there exists a finite extension $L/K$ and a set $S$ of places of $L$
    of density $1$, such that the reduction $(X_L)_\idealq$
    is ordinary for all $\idealq\in S$,
  \item for all places $\idealp$ of characteristic $p\geq5$, where $X$ has good 
    and non-supersingular reduction, the geometric Picard rank 
    $\rho((X_\idealp)_{\overline{\FF}_p})$ is even.
 \end{enumerate}
\end{Theorem}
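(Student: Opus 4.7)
The plan is to handle the three items independently, each appealing to a different classical result.

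Part (1) is a standard spreading-out argument: embed $X$ in $\PP^N_K$ via a sufficiently ample power of the polarization, clear denominators in the defining equations, and take scheme-theoretic closure to obtain a flat projective model $\mathcal{X}\to\Spec\OO_K[1/N]$ for some integer $N$. Since the smooth locus is open on the source and the generic fiber is smooth, after inverting finitely many further primes one obtains a smooth projective family $\mathcal{X}\to\Spec\OO_K[1/N']$. The finitely many remaining places are the places of bad reduction.

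For part (2), I would invoke the theorem of Bogomolov-Zarhin. The idea is to study the $\ell$-adic Galois representation on $H^2_{\text{\'et}}(X_{\overline{K}},\QQ_\ell)$ and choose a finite extension $L/K$ so that the image of $\mathrm{Gal}(\overline{L}/L)$ lies in the connected component of its Zariski closure $G\subseteq\mathrm{GL}(H^2_{\text{\'et}})$. This algebraic group is controlled by the Mumford-Tate group of the K3 motive, and one shows that the subset of $g\in G$ whose characteristic polynomial has Newton polygon equal to the Hodge polygon (slopes $0,1,2$ with multiplicities $1,20,1$) is Zariski-open and non-empty. By Chebotarev density, the unramified places $\idealq$ of $L$ whose Frobenius lies in this open subset form a set of density one, and at such places the comparison between $\ell$-adic and crystalline Frobenius polynomials shows that $(X_L)_\idealq$ is ordinary.

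Part (3) is the main obstacle and rests on the full strength of the Nygaard-Ogus structure theorem in characteristic $p\geq 5$. Let $Y:=(X_\idealp)_{\overline{\FF}_p}$ be a non-supersingular reduction of finite height $h<\infty$. By the Tate conjecture for finite-height K3 surfaces in characteristic $p\geq 5$ (Nygaard-Ogus), $\rho(Y)$ equals the dimension of the Tate subspace of $H^2_{\text{cris}}(Y/W)[1/p]$, which is contained in the slope-one piece of the $F$-isocrystal. Poincar\'e duality endows $H^2_{\text{cris}}(Y/W)$ with a non-degenerate symmetric bilinear form compatible with $F$, and the Newton slopes are $1-1/h$, $1$, $1+1/h$ with multiplicities $h$, $22-2h$, $h$. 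The orthogonal complement of $\mathrm{NS}(Y)\otimes\QQ_p$ inside the slope-one piece is a self-dual $F$-sub-isocrystal carrying a natural CM-action coming from the formal Brauer group of height $h$, which forces its $\QQ_p$-dimension to be even. Hence $22-2h-\rho(Y)$ is even, so $\rho(Y)$ itself is even, as claimed.
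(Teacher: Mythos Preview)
Your treatments of (1) and (2) are fine and match the paper, which simply invokes openness of smoothness for (1) and cites Bogomolov--Zarhin (and Joshi--Rajan) for (2); your sketch of the Bogomolov--Zarhin strategy is essentially the one in that reference.

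For (3) there is a real gap. You claim that the orthogonal complement of ${\rm NS}(Y)\otimes\QQ_p$ inside the slope-one part of $H^2_{\rm cris}(Y/W)[1/p]$ ``carries a natural CM-action coming from the formal Brauer group of height $h$'', and that this forces its dimension to be even. But the Dieudonn\'e module of the formal Brauer group is, by Artin--Mazur, the part of $H^2_{\rm cris}$ of slope strictly less than $1$ --- the piece of slope $1-1/h$ and rank $h$ --- and its endomorphism algebra acts \emph{there}, not on the slope-one summand. There is no mechanism by which the formal Brauer group endows the transcendental slope-one isocrystal with a CM-structure, and a slope-one $F$-isocrystal with a non-degenerate symmetric pairing can perfectly well have odd rank in the abstract. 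So the key step is unsupported. (You may be thinking of the characteristic-zero phenomenon that a K3 surface \emph{with complex multiplication} has a CM-field acting on its transcendental lattice, forcing even rank; but that is a hypothesis, not a conclusion, and has nothing to do with the formal Brauer group.)

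The argument the paper intends is different and, once Tate is granted, elementary. Work $\ell$-adically over a finite field $\FF_q$ of definition. Cup product makes the geometric Frobenius $\phi$ a similitude on $H^2$ with multiplier $q^2$, so eigenvalues occur in pairs $\{\alpha,\,q^2/\alpha\}$ of equal multiplicity, and the only self-paired values are $\alpha=\pm q$. By Nygaard--Ogus, $\rho(Y_{\overline{\FF}_p})$ is exactly the number of eigenvalues with $\alpha/q$ a root of unity. Every ``transcendental'' eigenvalue (one with $\alpha/q$ not a root of unity) is then distinct from its partner $q^2/\alpha$, so these eigenvalues come in genuine pairs and $22-\rho$ is even. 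This is what the paper means by ``follows from the Weil conjectures and the results on the Tate conjecture''; no crystalline CM-structure is needed or available.
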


\begin{proof}
The first assertion follows from openness of smoothness.
The second statement is shown in \cite{bogomolov zarhin}, as well
as by Joshi and Rajan (unpublished).
The final assertion follows from the Weil conjectures and the 
results on the Tate conjecture in \cite{nygaard ogus}, see also 
\cite[Theorem 15]{bht}.
\end{proof}

The following result shows that we can find rational curves of arbitrary high
degree when reducing a surface with odd Picard rank 
modulo $p$:

\begin{Proposition}
  \label{produce high degree curve}
  Let $(X,H)$ be a polarized K3 surface over a number field $K$,
  such that $\Pic(X)=\Pic(X_{\overline{\QQ}})$ and such that the
  Picard rank is odd.
  Then, there is a finite extension $L/K$ such that
  for every $N\geq0$ there exists a set $S_N$ of places of $L$
  of density $1$ such that for all $\idealq\in S_N$
  \begin{enumerate}
   \item the reduction $(X_L)_\idealq$ is a smooth and non-supersingular K3 surface,
   \item the reduction $H_\idealq$ is ample,
   \item there exists an integral rational curve
      $D_\idealq$ on $((X_L)_\idealq)_{\overline{\FF}_p}$ such that
      \begin{enumerate}
         \item the class of $D_\idealq$ does not lie in 
           $\Pic(X)\otimes_\ZZ\QQ$, where we view
           $\Pic(X)$ as subgroup of $\Pic(((X_L)_\idealq)_{\overline{\FF}_p})$ 
           via the specialization homomorphism, and
         \item $D_\idealq\cdot H_\idealq\geq N$.
      \end{enumerate}
  \end{enumerate}
\end{Proposition}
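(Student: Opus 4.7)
The plan is to combine Theorem~\ref{thm bzno}, the Bogomolov-Mumford Theorem~\ref{rational curve in linear system}, and a properness argument on relative Hilbert schemes.

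First, I will apply Theorem~\ref{thm bzno} to $X/K$ to obtain a finite extension $L/K$ and a density~$1$ set $S_0$ of places of $L$ at which $(X_L)_{\idealq}$ is smooth and ordinary (in particular non-supersingular). Discarding the finitely many places of residue characteristic $<5$, part~(3) of that theorem further ensures that the geometric Picard rank $\rho_\idealq$ of $((X_L)_\idealq)_{\overline{\FF}_p}$ is even. Next, I will fix a smooth projective model $\mathcal{X}\to\Spec\OO_L[1/S']$ of $X_L$ with a relatively ample extension $\mathcal{H}$ of $H$, and shrink $S_0$ so that $H_\idealq$ is ample for every $\idealq\in S_0$. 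Parts~(1) and~(2) of the proposition then hold on $S_0$.

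Second, I will produce a ``new'' rational curve at each $\idealq\in S_0$. Since $\rho_0:=\mathrm{rk}\,\Pic(X)$ is odd and $\rho_\idealq$ is even, the injective specialization $\Pic(X)=\Pic(X_{\oQQ})\hookrightarrow\Pic(((X_L)_\idealq)_{\overline{\FF}_p})$ has proper image; let $M_\idealq$ be any class on the reduction whose $\QQ$-class lies outside $\Pic(X)\otimes\QQ$. For $n\gg 0$ the class $L_n:=M_\idealq+nH_\idealq$ satisfies $L_n^2>0$ and $L_n\cdot H_\idealq>0$; by Riemann-Roch on a K3 together with ampleness of $H_\idealq$ it is effective, and clearly $[L_n]\notin\Pic(X)\otimes\QQ$. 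Applying Theorem~\ref{rational curve in linear system} to $L_n$ yields a decomposition $L_n\sim\sum a_iC_i$ with each $C_i$ an integral rational curve, and at least one, say $C$, has class outside $\Pic(X)\otimes\QQ$.

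The hard part will be forcing $C\cdot H_\idealq\geq N$. I will show that outside a finite further set of places $T_N$, every integral rational curve of $H_\idealq$-degree $<N$ on $((X_L)_\idealq)_{\overline{\FF}_p}$ has class in $\Pic(X)\otimes\QQ$. The key input is that such a curve of degree $d<N$ satisfies $-2\leq C^2\leq d^2/H^2$ by Hodge index, so its arithmetic genus is bounded and only finitely many Hilbert polynomials $P_1,\dots,P_r$ arise. Each $\mathrm{Hilb}^{P_j}(\mathcal{X}/\Spec\OO_L[1/S'])$ is projective of finite type over the one-dimensional base, hence has finitely many irreducible components, each either \emph{horizontal} (dominant over the base) or \emph{vertical} (supported over finitely many closed points). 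A rational curve lying in a horizontal component $W$ is a fiber of a flat family over $W$; by connectedness of $W$ and the resulting constancy of its image in the relative Picard scheme of the smooth K3 family $\mathcal{X}/\Spec\OO_L[1/S']$, its class is the specialization of a class on the generic fiber, which lies in $\Pic(X_{\oQQ})=\Pic(X)$. Taking $T_N$ to be the finite union of closed points over which some vertical component of some $\mathrm{Hilb}^{P_j}$ is supported, the density-$1$ set $S_N:=S_0\setminus T_N$ has the property that the curve $C$ from the second step must satisfy $C\cdot H_\idealq\geq N$. Setting $D_\idealq:=C$ concludes the proof.

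I expect the main obstacle to be this Hilbert-scheme step, where one must carefully justify that the class of a curve in a horizontal component of a relative Hilbert scheme is the specialization of a class on the generic K3 fiber; this uses representability and good specialization behavior of the relative Picard scheme for a smooth projective K3 family over $\Spec\OO_L[1/S']$, together with our assumption $\Pic(X)=\Pic(X_{\oQQ})$.
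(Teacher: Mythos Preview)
Your proof is correct and follows essentially the same strategy as the paper: produce at each good place a rational curve whose class escapes $\Pic(X)\otimes\QQ$, then use a finite-type parameter space over the integral base to show that low-degree rational curves at almost all places must specialize from characteristic zero. The only notable difference is that the paper uses the scheme $\mathrm{Mor}_{<N}(\PP^1,\mathcal{X})$ rather than relative Hilbert schemes (which lets it skip your Hodge-index bound on the arithmetic genus), and it packages the last step as a contradiction---infinitely many $\overline{\FF}_p$-points on a $\ZZ$-finite-type scheme force a $\overline{\QQ}$-point specializing to some $D_{\idealq'}$---in place of your explicit horizontal/vertical component analysis.
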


\begin{proof}
By Theorem \ref{thm bzno}, there exists a finite extension $L/K$ and set $S$ of
primes of density $1$, such that the reduction $(X_L)_\idealq$ for all $\idealq\in S$
is smooth and not supersingular.
Since ampleness is an open property, we may assume - after possibly removing a finite
number of places from $S$ - that the reduction $H_\idealq$ is ample.

By Theorem \ref{thm bzno}, we may choose for every $\idealq\in S_N$ an invertible
sheaf ${\cal L}_\idealq$
in $\Pic(((X_L)_\idealq)_{\overline{\FF}_p})$ that
does not lie in $\Pic(X_L)\otimes_\ZZ\QQ$.
Here, we view $\Pic(X)=\Pic(X_L)=\Pic(X_{\overline{\QQ}})$ 
as subgroup of $\Pic(((X_L)_\idealq)_{\overline{\FF}_p})$
via specialization.
Without loss of generality, these ${\cal L}_\idealq$ are effective, and thus,
by Theorem \ref{rational curve in linear system},
every $|{\cal L}_\idealq|$ contains a sum $D_\idealq$ of rational curves.
Passing to an appropriate subdivisor, we may assume
that $D_\idealq$ is a geometrically integral rational curve, whose
class does not lie in $\Pic(X)\otimes_\ZZ\QQ$.

Seeking a contradiction, we assume that $D_\idealq\cdot H_\idealq<N$ for
infinitely many $\idealq\in S$.
Let ${\cal X}\to\Spec\OO_{L,T}$ be a smooth model of $X_L$, where
$\OO_{L,T}$ is the ring of integers of $L$ localized
at some set of places $T$.
The scheme 
$$
   {\rm Mor}_{<N}(\PP^1, {\cal X}),
$$
which parametrizes morphisms $f:\PP^1\to{\cal X}$ with $f_*(\PP^1)\cdot H<N$,
is of finite type over $\ZZ$.
It has $\overline{\FF}_p$-rational points
for infinitely many $p$, corresponding to all the $D_\idealq$
with $D_\idealq\cdot H_\idealq<N$.
Thus, it has a $\overline{\QQ}$-rational point, and we may
even assume that the corresponding rational curve 
$\widetilde{D}$ on $X_{\overline{\QQ}}$ 
specializes to a $D_{\idealq'}$ 
with $D_{\idealq'}\cdot H_{\idealq'}<N$.
Now, $\widetilde{D}$ gives a class in $\Pic(X_{\overline{\QQ}})=\Pic(X)$,
whereas, by construction, none of the $D_\idealq$ gives a class that
lies inside the image of the specialization homomorphism
$\Pic(X)\to\Pic(((X_L)_\idealq)_{\overline{\FF}_p})$, 
a contradiction.
Thus, after removing finitely many places from $S$ we arrive
at a set $S_N$ such that $D_\idealq\cdot H_\idealq\geq N$
for all $\idealq\in S_N$.
\end{proof}

After this preparation, we now come to our main result:

\begin{Theorem}
  \label{main}
  A projective K3 surface $X$ with odd Picard rank
  over an algebraically closed field 
  of characteristic zero contains infinitely 
  many rational curves.
\end{Theorem}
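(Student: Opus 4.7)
The plan is to combine the Bogomolov–Hassett–Tschinkel reduction with the rigidifier machinery developed in Section~\ref{sec: isolated}. By Remark~\ref{bht remark}, it suffices to establish the theorem for a K3 surface $X$ defined over a number field $K$ with $\Pic(X)=\Pic(X_{\overline{\QQ}})$ of odd rank $\rho$; fix an ample polarization $H$ with $(X,H)\in{\cal M}_{2d}$. I would argue by contradiction: assume $X_{\overline{\QQ}}$ contains only finitely many integral rational curves, so their $H$-degrees are uniformly bounded by some $M$.

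Given $N>M$, Proposition~\ref{produce high degree curve} produces a finite extension $L/K$, a place $\idealq$ of $L$ of residue characteristic $p\geq 5$, and an integral rational curve $D_\idealq$ of $H_\idealq$-degree at least $N$ on the smooth non-supersingular reduction $X_\idealq:=(X_L)_\idealq$, whose class is not in the image of the specialization $\Pic(X)\hookrightarrow\Pic(X_\idealq)$. I would then deform $X_\idealq$ within equal characteristic $p$ along the smooth formal Noether–Lefschetz divisor of its polarized deformation space along which $[D_\idealq]$ remains algebraic. By Proposition~\ref{prop:rigidifier open} combined with a density argument inside the enriched Picard-lattice stratum of ${\cal M}_{2d}$, I would find a nearby polarized K3 surface $(X',H')$ that admits both a deformation $D'$ of $D_\idealq$ (still an integral rational curve of $H'$-degree $\geq N$, produced from the extended class via Theorem~\ref{rational curve in linear system}) and a rigidifier $\phi:\PP^1\to X'$.

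Theorem~\ref{rigidifiers produce rigid maps} then yields a rigid genus-zero stable map $[g,\widetilde{C}]$ on $X'$ with image $D'+k\,\phi_\ast\PP^1$ for some $0\leq k\leq 1$. Because $[g]$ is rigid, the properness-and-surjectivity argument from the proof of Proposition~\ref{prop:rigidifier open} propagates the component of $\overline{\cal M}_0$ through $[g]$ along the whole deformation family over which its class persists. Choosing the rigidifier so that the total class of $[g]$ lies in $\Pic(X)\otimes\QQ$, I propagate first back to $X_\idealq$ inside characteristic $p$ and then along the formal lift of $X_\idealq$ to $X_L$ in characteristic zero: the outcome is a rigid stable map $[\bar g,\bar C]$ on $X_{\overline{\QQ}}$ whose image is a sum of integral rational curves with total $H$-degree at least $N$. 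The lift of the $D'$-component is an integral rational curve of $H$-degree $\geq N$, contradicting the bound $M$ and completing the proof.

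The principal obstacle is the lifting step. The output of Theorem~\ref{rigidifiers produce rigid maps} has total class $[D']+k\,\phi_\ast[\PP^1]$; for the rigid stable map to extend from the Noether–Lefschetz divisor back to the full deformation family (and hence to $X_{\overline{\QQ}}$ itself, rather than only to a K3 surface in the Noether–Lefschetz locus of strictly larger Picard rank), this class must lie in the image of the specialization $\Pic(X)\hookrightarrow\Pic(X_\idealq)$. Arranging this demands a careful choice of rigidifier — for instance, one with class $n[H']-[D']$ for $n\gg 0$, so that the total class collapses to $n[H']$ — which in turn requires an extension of Proposition~\ref{prop:rigidifier open} from rigidifiers in $|nH'|$ to rigidifiers in more general ample classes within the enriched Picard stratum. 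I expect such an extension to follow from a Chen-type deformation-theoretic argument inside that stratum.
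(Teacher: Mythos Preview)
Your overall architecture matches the paper's, and you correctly locate the crux: after building a rigid stable map on a nearby surface $Z$ containing both a deformation $D$ of $D_\idealq$ and a rigidifier, its class must lie in $\ZZ H$ so that the map extends over the whole moduli space rather than only over a Noether--Lefschetz divisor. But your proposed fix---finding a rigidifier with class $nH'-D'$ via a hoped-for extension of Proposition~\ref{prop:rigidifier open} to non-polarization ample classes---is both unproven and unnecessary.

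The paper's maneuver is simpler: keep the rigidifier $R\in|rH|$ exactly as Proposition~\ref{prop:rigidifier open} provides, and instead \emph{pad the list of curves to be rigidified}. Choose $m\gg 0$ so that $|mH-D|$ is effective on $Z$; Theorem~\ref{rational curve in linear system} (Bogomolov--Mumford) then writes a member of $|mH-D|$ as a sum $\sum_i D_i$ of integral rational curves. Feeding the whole collection $D,D_1,D_2,\ldots$ into Theorem~\ref{rigidifiers produce rigid maps} produces a rigid stable map of class $[D]+\sum_i[D_i]+k[R]=mH+krH=(m+kr)H$, a multiple of the polarization, which therefore extends over the full moduli space with no appeal to a Chen-type extension. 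A secondary gap in your sketch: once you have a stable map on $X_{\overline{\QQ}}$ in class $(m+kr)H$, you still must argue that its image contains a component specializing to $D_\idealq$ rather than some unrelated configuration in the same linear system; the paper handles this via the Stein factorization of $\overline{\cal M}_0({\cal Y}/U,(m+kr){\cal H})\to U$ and connectedness of its fibers over $X_\idealq$, a step your outline elides.
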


\begin{proof}
By \cite[Theorem 3]{bht} and Remark \ref{bht remark}, we may and will assume 
that $X$ is defined over $\overline{\QQ}$.
We choose a number field $K$ such that $X$ and
every class of $\Pic(X)$ is already defined over $K$.
Then, we replace $X$ by this model over $K$ and choose 
a polarization $H$, which has some degree $2d:=H^2$, say.
Let ${\cal M}_{2d}$ be the corresponding
moduli space of polarized K3 surfaces, which exists as a separated
Deligne-Mumford stack over $\ZZ$ by \cite{rizov}.

We choose an arbitrary positive integer $N$.
Our theorem follows, if we find
an integral rational curve $D\subset X_{\overline{\QQ}}$ 
with $D\cdot H\geq N$.

Let $L/K$ and $S_N$ be as in Proposition \ref{produce high degree curve}
and replace $X$ by $X_L$.
After possibly removing a finite set of places from $S_N$, 
we find a model ${\cal X}\to \Spec \OO_{L,S_N}$ of $X$ over the ring 
of integers of $L$ localized at $S_N$.
After passing to a Zariski-open subset of ${\cal M}_{2d}$ 
and taking an appropriate finite and \'etale
cover, we arrive at a polarized family of K3 surfaces
$({\cal Y},{\cal H})\to U$
containing ${\cal X}\to\Spec\OO_{L,S_N}$, and
where $U$ is a scheme of finite type over $\ZZ$.
We note that there exists a proper algebraic stack
of stable maps $\overline{{\cal M}}_0({\cal Y}/U)\to U$, see
\cite{abramovich vistoli}.
(The reason for passing to the scheme $U$ rather than working
with ${\cal M}_{2d}$ is to avoid certain technicalities when working
with moduli of stable maps over a base that is a
Deligne-Mumford stack, see \cite[Section (1.3)]{abramovich vistoli}.)

Let $f_\idealq:\PP^1\to{\cal X}_\idealq$ be the stable map
representing $D_\idealq$, which is rigid, as 
$X_\idealq$ is not supersingular.
Thus, the relative dimension of 
$\overline{{\cal M}}_0({\cal Y}/U)\to U$
at $[f_\idealq]$ is zero.
We choose a component of $\overline{{\cal M}}_0({\cal Y}/U)$
through $[f_\idealq]$ and denote its image in $U$ by
$B_\idealq$.
Deformation theory of morphisms implies that
$\dim B_\idealq\geq\dim U-1$.
However, the invertible sheaf $\OO_{X_\idealq}(D_\idealq)$ cannot extend
over all of $U$, which implies that $B_\idealq$ is
a divisor in $U$.
To keep things simpler, we replace $B_\idealq$ by an
irreducible divisor through $X_\idealq$.
If $B_\idealq$ were not flat over $\ZZ$, it would
be contained completely in characteristic $p$,
in which case $\OO_{X_\idealq}(D_\idealq)$ would
extend to ${\cal M}_{2d}\otimes_\ZZ\FF_p$.
However, the formal divisor inside the formal deformation 
space $\Spf W(k)[[x_1,...,x_{20}]]$ of $X_\idealq$ along
which $\OO_{X_\idealq}(D_\idealq)$ extends, is flat
over $W(k)$ by \cite[Corollaire 1.8]{deligne}.
This implies that
$\OO_{X_\idealq}(D_\idealq)$ cannot extend over 
${\cal M}_{2d}\otimes_\ZZ\FF_p$.
Thus, $B_\idealq$ is a divisor in $U$ that is flat over $\ZZ$.

We claim that for every $\idealq\in S_N$, there are only
finitely many $\idealq'\in S_N$ such that $B_\idealq=B_{\idealq'}$:
if not, ${\cal X}\to\Spec\OO_{L,S_N}$ would specialize
into $B_\idealq$ for infinitely many places $\idealq\in S_N$.
This would imply that also the generic fiber $X$ is a point of
$B_\idealq$, i.e., $D_\idealq$ and the invertible sheaf
$\OO_{X_\idealq}(D_\idealq)$ would lift from $X_\idealq$
to $X$, a contradiction.
Thus, the $B_\idealq$'s form a set with infinitely many
distinct divisors in $U$.
By the openness result Proposition \ref{prop:rigidifier open},
almost all of these $B_\idealq$'s contain surfaces with
rigidifiers in some multiple of their polarization.
Let $\idealq\in S_N$ be such a place.

The curve $D_\idealq\subset X_\idealq$ extends to a rational
curve along $B_\idealq$ and on an open dense subset
this extension will be an integral curve.
Let $Z$ be a non-supersingular K3 surface on $B_\idealq$ 
such that $D_\idealq$ extends to some integral rational curve 
$D$ on $Z$ and such that $Z$ contains a 
rigidifier $\PP^1\to R\subset Z$ with $R\in|rH|$
for some $r\in\NN$.
Next, for a sufficiently large integer $m$, the linear
system $|mH-D|$ is effective.
By Theorem \ref{rational curve in linear system}, there exist
integral rational curves $D_i$, such that $\sum_i D_i$
lies in $|mH-D|$.
By Theorem \ref{rigidifiers produce rigid maps}, 
there exists an integer $k$ and a {\em rigid} stable map 
$[f_Z]\in\overline{{\cal M}}_0(Z, (m+kr)H)$
representing $D+\sum_i D_i + kR$.

Now, $\overline{{\cal M}}_0({\cal Y}/U, (m+kr){\cal H})$ is at least
$19$-dimensional, as shown in the proof of \cite[Theorem 19]{bht}.
It is proper over $U$, which is also $19$-dimensional.
Since the fiber above $Z\in U$ at $[f_Z]$
is zero-dimensional, we can extend $[f_Z]$ over the whole of $U$.

There exists a component ${\cal M}$ of 
$\overline{\cal M}_0({\cal Y}/U, (m+kr){\cal H})\to U$
containing $[f_Z]$.
Also, there exists a connected family $[f_t]$  
of stable maps in ${\cal M}$, containing $[f_Z]$ and
whose limit $f_{X_\idealq}:C_\idealq\to X_\idealq$ over 
$X_\idealq\in U$ 
contains $D_\idealq$ in its image.
We pass to the Stein factorization of 
$\overline{\cal M}_0({\cal Y}/U, (m+kr){\cal H})\to U$, and let
${\cal M}'$ be the component that contains 
this family $[f_t]$.
Then, we choose some point $[f_X]\in{\cal M}'$ lying above
the surface $X$.
The corresponding stable map $f_X:C_X\to X$ specializes to
some stable map on $X_\idealq$.
Now, since ${\cal M}'$ has connected fibers,
the specialization of $[f_X]$ modulo $\idealq$ is a deformation
of $[f_{X_\idealq}]$.
But $X_\idealq$ is not supersingular, and so the two stable
maps have the same image curve.
In particular, the image $f_X(C_X)$ contains an integral rational curve
$\widetilde{D}$ that contains $D_\idealq$ in its specialization.
We compute
$\widetilde{D}\cdot H\geq D_\idealq\cdot H_\idealq\geq N$, which establishes
existence of an integral rational curve of degree $\geq N$. 
\end{proof}

We finish with a characteristic $p$ version
of Theorem \ref{main} for polarized K3 surfaces.

\begin{Theorem}
  \label{main p>0}
  A non-supersingular 
  K3 surface with odd Picard rank 
  over an algebraically closed field of characteristic $p\geq5$
  contains infinitely many rational curves.
\end{Theorem}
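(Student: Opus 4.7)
I would run the proof of Theorem \ref{main} in reverse: choose a characteristic-zero K3 surface $Z$ in the same connected component of the moduli stack as $[X,H]$ with odd Picard rank and admitting a rigidifier, apply Theorem \ref{main} to $Z$ to produce an integral rational curve of arbitrary high degree on $Z$, and then propagate the resulting rigid stable map back down to $X$. Fix non-supersingular $(X,H)$ over $k=\overline{\FF}_p$ with $H^2=2d$ and $\rho(X)$ odd, and an arbitrary $N \in \NN$; it suffices to exhibit an integral rational curve $D \subset X$ with $D \cdot H \geq N$.

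\textbf{Setup and extraction of a curve.} Just as in Theorem \ref{main}, I would pass to an \'etale scheme $U$ of finite type over $\ZZ$ covering a neighborhood of $[X,H]$ in ${\cal M}_{2d}$, with universal family ${\cal Y} \to U$. Since ${\cal M}_{2d}$ is irreducible over $\ZZ[1/(2d)]$, characteristic-zero points of $U$ are dense, as are those with Picard rank $1$ (odd); by Proposition \ref{prop:rigidifier open}, so are those admitting a rigidifier. Choose $Z \in U$ satisfying all three conditions, with rigidifier $f\colon\PP^1 \to Z$, $f_\ast\PP^1 \in |r{\cal H}_Z|$, and then apply Theorem \ref{main} to $Z$ to extract an integral rational curve $D_Z \subset Z$ with $D_Z \cdot {\cal H}_Z \geq N$ --- the proof of Theorem \ref{main} yields curves of arbitrarily high degree.

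\textbf{Rigid stable map and its propagation.} Next, I would invoke Theorem \ref{rational curve in linear system} to augment $D_Z$ with further integral rational curves $D_i$ on $Z$ so that $D_Z + \sum_i D_i \in |m{\cal H}_Z|$ for $m$ large, and then apply Theorem \ref{rigidifiers produce rigid maps} to construct a rigid genus-zero stable map $[f_Z, C_Z]$ representing $D_Z + \sum_i D_i + k \cdot f_\ast\PP^1$. By rigidity together with the dimension count matching $\dim U = 19$, the component of $\overline{\cal M}_0({\cal Y}/U, (m+kr){\cal H})$ through $[f_Z]$ is generically finite over $U$; properness plus Stein factorization extend this component over the whole of $U$, in particular to $[X]$. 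Taking a stable map $[f_X, C_X]$ above $[X]$, non-supersingularity of $X$ prevents integral rational curves on $X$ from moving in a family, forcing the image decomposition of the rigid stable map to remain constant along the propagation. The source component corresponding to $D_Z$ therefore maps birationally to an integral rational curve $D \subset X$ with $D \cdot H = D_Z \cdot {\cal H}_Z \geq N$.

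\textbf{Where the main difficulty lies.} The most delicate step will be controlling the mixed-characteristic specialization of the rigid stable map: I need to verify that the component encoding $D_Z$ does not collapse, merge with a rigidifier branch, or split into a multiple cover, so that its image on $X$ remains a single integral rational curve of the expected degree. This will be handled by combining rigidity of $[f_Z]$ (via Theorem \ref{rigidifiers produce rigid maps}) with non-supersingularity of $X$, rerunning the tameness-and-separatedness step at the end of Lemma \ref{rigid} in the mixed-characteristic setting; the hypothesis $p \geq 5$ enters precisely at this point through the liftability and non-supersingular deformation theory of K3 surfaces used implicitly via Theorem \ref{rational curve in linear system} and Theorem \ref{thm bzno}.
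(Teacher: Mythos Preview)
Your proposal has a genuine gap in the propagation step, and it stems from running the degree inequality in the wrong direction.

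In the proof of Theorem~\ref{main}, the high-degree curve $D_\idealq$ lives on a \emph{specialization} $X_\idealq$ of the target surface $X$. One then lifts a rigid stable map to $X$, and the key inequality $\widetilde{D}\cdot H \geq D_\idealq\cdot H_\idealq$ holds because an integral curve on $X$ specializes to an effective cycle on $X_\idealq$ containing $D_\idealq$ as a component; degree can only go up when passing from the special fiber to the generic fiber. In your setup the roles are reversed: you produce $D_Z$ on a characteristic-zero surface $Z$ and then try to transport it to the characteristic-$p$ surface $X$. But $Z$ and $X$ are two unrelated points of $U$; neither specializes to the other, and even if you connect them through a family, the stable map $[f_X]$ over $X$ is only guaranteed to have image in $|(m+kr)H|$. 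Nothing prevents that image from decomposing into many low-degree integral rational curves on $X$. Your sentence ``non-supersingularity of $X$ \dots\ forcing the image decomposition of the rigid stable map to remain constant along the propagation'' conflates two different things: non-supersingularity says rational curves do not move on a \emph{fixed} $X$, but it says nothing about how the image decomposition varies as the underlying surface moves in $U$. There is no ``source component corresponding to $D_Z$'' that persists with constant degree across the family; the domain curve and the image decomposition can and do change.

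The paper's proof avoids this by never leaving characteristic $p$ for the production of the high-degree curve. Since $X$ has odd Picard rank, \cite[Theorem~15]{bht} shows $X$ cannot be defined over $\overline{\FF}_p$, so one spreads $X$ out as the generic fiber of a family ${\cal X}\to B$ over a positive-dimensional base $B$ over a finite field, with all fibers non-supersingular. The closed fibers are K3 surfaces over finite fields and hence have even Picard rank (this is exactly where $p\geq5$ enters, via the Nygaard--Ogus Tate conjecture in Theorem~\ref{thm bzno}), so they carry extra classes and, by the analogue of Proposition~\ref{produce high degree curve}, integral rational curves $D_\idealq$ with $D_\idealq\cdot H\geq N$. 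From there the rigidifier argument of Theorem~\ref{main} lifts these curves to the generic fiber $X$, with the degree inequality now going the right way. Your identification of where $p\geq5$ is used is therefore also off: it is needed for the even-Picard-rank input at the closed points of $B$, not for liftability.
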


\begin{proof}
As explained in \cite[Theorem 15]{bht}, $X$ cannot be defined
over a finite field. 
Thus, we may assume that $X$
is defined over the function field of a variety $B$
with $\dim B\geq1$
over some finite field $\FF_q\supseteq\FF_p$.
After possibly shrinking $B$,
we find a smooth fibration of K3 surfaces ${\cal X}\to B$
with generic fiber $X$. 
By closedness of supersingularity \cite{artin supersingular},
we may, after possibly shrinking $B$ further, assume
that no fiber is supersingular.

By \cite[Theorem 15]{bht}, the 
Picard rank of $({\cal X}_\idealq)_{\overline{\FF}_p}$ for every closed 
point $\idealq$ of $B$ is even.
Thus, given $N$, a straight forward adaption of 
Proposition \ref{produce high degree curve} shows that
for almost all closed points $\idealq\in B$ there exists 
an integral rational curve
$D_\idealq\subset ({\cal X}_\idealq)_{\overline{\FF}_p}$ 
with $D_\idealq\cdot H\geq N$.

>From here we argue as in the proof of Theorem \ref{main}
to produce an integral rational curve $D$ with $D\cdot H\geq N$
on $X$.
We leave the details to the reader.
\end{proof}

\begin{Remark}
Let us comment on the assumptions:
\begin{enumerate}
 \item According to conjectures of Artin, Mazur and Tate,
   supersingular K3 surfaces should satisfy $\rho=b_2=22$,
   see, for example, \cite{artin supersingular}.
   In particular, there should exist no supersingular K3 
   surfaces with odd Picard rank.
 \item Using \cite{nygaard} in the proof of Proposition \ref{produce high degree curve},
   and openness of ordinarity in equal characteristic,
   we see that Theorem \ref{main p>0} also holds for ordinary K3 surfaces
   with odd Picard rank in characteristic $p=2,3$.
\end{enumerate}
\end{Remark}

\end{document}